\begin{document}
\newtheorem{theo}{Theorem}
\newtheorem{defin}[theo]{Definition}
\newtheorem{rem}[theo]{Remark}
\newtheorem{lemme}[theo]{Lemma}
\newtheorem{cor}[theo]{Corollary}
\newtheorem{prop}[theo]{Proposition}
\newtheorem{exa}[theo]{Example}
\newtheorem{exas}[theo]{Examples}
%%%%%%%%%%%%%%%%%%%%%%%%%%%%%%%%%%%
%%%%%%%%%%%%%%%%%%%%%%%%%%%%%%%%%%%%
%
%
\subjclass[2010]{Primary 35A15  Secondary 35J50 ; 35J60  } \keywords{ Fountain theorem; Kryszewski-Szulkin degree; $\tau-$topology; Strongly indefinite functional.}
\thanks{This work was funded by an NSERC grant.}
\title[Generalized fountain theorem and application]{Generalized fountain theorem and application to strongly indefinite semilinear problems}

\author[C. J. Batkam]{Cyril Jo\"el Batkam}
\address{Cyril Jo\"el Batkam \newline
D\'epartement de math\'ematiques,
\newline
Universit\'e de Sherbrooke,
\newline
Sherbrooke, (Qu\'ebec),
\newline
J1K 2R1, CANADA.
\newline {\em Fax:} (819)821-7189.}
\email{cyril.joel.batkam@Usherbrooke.ca}
\author[F. Colin]{Fabrice Colin}
\address{Fabrice Colin \newline
Department of Mathematics and Computer Science,
\newline
Laurentian University,
\newline
Ramsey Lake Road, Sudbury, (Ontario),
\newline
P3E 2C6, CANADA.
\newline {\em Fax:} (705)673-6591.}
\email{fcolin@cs.laurentian.ca}
\begin{abstract}
By using the degree theory and the $\tau-$topology of Kryszewski and Szulkin, we establish a version of the Fountain Theorem for strongly indefinite functionals.
The abstract result will be applied for studying the existence of infinitely many solutions of two strongly indefinite semilinear problems including the semilinear Schr\"{o}dinger equation.
\end{abstract}
\maketitle
%%%%%%%%%%%%%%%%%%%%%%%%%%%%%%%%%%%%%%%%%%%%%%%%%%%%
%%%%%%%%%%%%%%%%%%%%%%%%%%%%%%%%%%%%%%%%%%%%%%%%%%%%
\section*{Introduction}
%%%%%%%%%%%%%%%%%%%%%%%%%%%%%%%%%%%%%%%%%%%%%%%%%%%%
%%%%%%%%%%%%%%%%%%%%%%%%%%%%%%%%%%%%%%%%%%%%%%%%%%%%
%
The Fountain Theorem established by T. Bartsch \cite{B} and M. Willem \cite{W} is a powerful tool for studying the existence of infinitely many critical points of some indefinite functionals, which are symmetric in the sense of
admissibility posed by Bartsch \cite{B} (see also \cite{B-W, W}). This theorem has been used by several authors to study the existence of infinitely many large energy solutions of various semilinear problems, see for instance \cite{B, B-W, Y-H, Ton, Sev, I, W}. We also mention the paper of W. Zou \cite{Zou}, where a variant Fountain Theorem was established without $(PS)-$type assumption.
\paragraph{} \indent The aim of this paper is to generalize the Fountain Theorem in order to apply it to a wider class of indefinite functionals, specially to strongly indefinite functionals; that is functionals of the form $\phi(u)=\frac{1}{2}\big<Lu,u\big>-\psi(u)$ defined on a Hilbert space $X$, where $L:X\rightarrow X$ is a selfadjoint operator with negative and positive eigenspace both infinite-dimensional. The study of such functionals is motivated by a number of problems from mathematical physic. They arise for example in the study of periodic solutions of the one-dimensional wave equation, in the study of periodic solutions of Hamiltonian systems, or in the existence theory for systems of elliptic equations.
%The main ingredients are
%Our approach is based on the degree theory and the $\tau-$topology of Kyszewski and Szulkin \big(see \cite{K-S, W}\big).
\\
\paragraph{}
The paper is organized as follows: In the first section, the main abstract results are outlined and future applications are discussed while in section $2$, the degree theory of Kryszewski and Szulkin is introduced and a Borsuk-Ulam type theorem for admissible maps is stated. In section $3$, we construct a deformation which will be very helpful to establish the new fountain theorem in section $4$. Sections $5$ and $6$ are dedicated to the application of the abstract result to a nonlinear stationary Schr\"{o}dinger equation and to a noncooperative system of elliptic equations, respectively.

 \section{Main Results}
 Let $X$ be a real Hilbert space with inner product $(\cdot)$, norm $\|\cdot\|$ and decomposition $X=Y\oplus Z$, where $Y$ is closed and separable, and $Z=\overline{\oplus_{j=0}^\infty\mathbb{R} e_j}$. Define for $k\geq2$ and $0<r_k<\rho_k$,
 \begin{equation*}
    \displaystyle Y_k:=Y\oplus(\oplus_{j=0}^k\mathbb{R} e_j),\,\ Z_k:=\overline{\oplus_{j=k}^\infty\mathbb{R} e_j},
 \end{equation*}
 \begin{equation*}
    \displaystyle B_k:=\big\{u\in Y_k\,\ \bigl|\,\ \|u\|\leq\rho_k\big\}\,\ and \,\ N_k:=\big\{u\in Z_k\,\ \bigl|\,\ \|u\|=r_k \big\}.
 \end{equation*}
Consider on $X=Y\oplus Z$ the $\tau-$topology introduced by Kryszewski and Szulkin in \cite{K-S} (see also \cite{W}, Chapter $6$), and let $\varphi:X\rightarrow\mathbb{R}$ be a $\mathcal{C}^1-$functional such that $\varphi$ is $\tau-$upper semicontinuous, and $\nabla\varphi$ is weakly sequentially continuous. We are interested in the obtention of critical points of $\varphi$ when the latter has the following linking geometry
\begin{equation*}
    \sup_{\substack{u\in\partial B_k}}\varphi(u)<\inf_{\substack{u\in N_k}}\varphi(u).
\end{equation*}
In order to achieve this goal, we assume that $\varphi$ is invariant under an admissible action of a finite group $G$ (see Definition $\ref{defadmissible}$ below), which the antipodal action of $\mathbb{Z}_2$ is a particular case. More precisely, we will introduce the following assumption:
\begin{enumerate}
  \item [$(A_1)$] A  finite group  $G$ acts isometrically and $\tau$-isometrically on $X$, and the action of $G$ on every subspace of $X$ is admissible (in the sense of Definition $\ref{defadmissible}$).
\end{enumerate}
Roughly speaking, the above-mentioned admissibility authorizes an extension of the Borsuk-Ulam theorem for a certain class of functions conveniently called $\sigma-$admissible later in the text. By taking advantage of the notion of admissible group, we show that $B_k$ and $N_k$ link in the following sense: If $\gamma:B_k\rightarrow X$ is $\tau-$continuous and equivariant, $\gamma_{|\partial B_k}=id$, and every $u\in B_k$ has a $\tau-$neighborhood $N_u$ in $Y_k$ such that $(id-\gamma)\bigl(N_u\cap int(B_k)\bigr)$ is contained in a finite-dimensional subspace of $X$, then $\gamma(B_k)\cap N_k\neq\emptyset$. Since in our purpose $Y$ can be infinite-dimensional, the Brouwer degree \big(which is usually used in the finite-dimensional case\big) will be replaced with the Kryszewski-Szulkin degree (see Definition \ref{deg} below) in the establishment of the above linking. By constructing an equivariant deformation with the flow of a certain gradient vector field, we finally show that if in addition
\begin{equation*}
    \sup_{\substack{B_k}} \varphi(u)<\infty,
\end{equation*}
 then there is a sequence $(u_k^n)\subset X$ such that
\begin{equation*}
    \varphi'(u_k^n)\rightarrow0,\,\,\,\,\,\,\,\ \varphi(u_k^n)\rightarrow \inf_{\substack{\gamma\in\Gamma_k}}\sup_{\substack{u\in B_k}}\varphi(u_k^n) \,\,\,\ \textnormal{as} \,\ n\rightarrow\infty,
\end{equation*}
where $\Gamma_k$ is a class of maps $\gamma:B_k\rightarrow X$ defined in Theorem $\ref{tm}$. The abstract result will be used to study the existence of infinitely many large energy solutions of the two following strongly indefinite semilinear problems.

\subsection{Semilinear Schr\"{o}dinger equation}
\indent For the first application we consider the following nonlinear stationary Schr\"{o}dinger equation
\begin{equation*}
    (\mathcal{P}) \,\,\,\,\,\,\,\,\,\,\ \left\{
                                          \begin{array}{ll}
                                            -\Delta u+V(x)u=f(x,u), & \hbox{} \\
                                          \,\,\  u\in H^1(\mathbb{R}^N), & \hbox{}
                                          \end{array}
                                        \right.
\end{equation*}
%It is well known that if $V\in \mathcal{C}(\Reel^N,\Reel)$ is $1-$periodic in $x_i, i=1\cdot\cdot\cdot N$, then the Schr\"{o}dinger operator $-\Delta+V$ has purely continuous spectrum which is bounded from below and consists of closed disjoint intervals
under the following assumptions:
%We get infinitely many large energy solutions under the following assumptions:
\begin{enumerate}
 \item [$(V_0)$] The function $V:{\mathbb R}^N\rightarrow{\mathbb R}$ is continuous and $1-$periodic in $x_1,...,x_N$ and $0$ lies in a gap of the spectrum of $-\Delta+V$.
 \item [$(f_1)$] The function $f:{\mathbb R}^N\times{\mathbb R}\rightarrow{\mathbb R}$ is continuous and $1-$periodic with respect to each variable $x_j, j=1,...,N$.
 \item [$(f_2)$] There is a constant $c>0$ such that
 $$|f(x,u)|\leq c(1+|u|^{p-1})$$
 for all $x\in {\mathbb R}^N$ and $u\in{\mathbb R}$, where $p>2$ for $N=1,2$ and $2<p<2N/N-2$ if $N\geq3$.
 \item [$(f_3)$] $f(x,u)=\circ(|u|)$ uniformly with respect to $x$ as $|u|\rightarrow0$.
 \item [$(f_4)$] There exists $\gamma>2$ such that for all $x\in {\mathbb R}^N$ and $u\in {\mathbb R}\backslash\{0\}$
 $$0<\gamma F(x,u)\leq uf(x,u),$$ where $F(x,u):=\int_0^u f(x,s)ds$.
 \item [$(f_5)$] For all $x\in {\mathbb R}^N$ and $u\in{\mathbb R}$, $f(x,-u)=-f(x,u)$.
 \end{enumerate}
 The natural energy functional associated to $(\mathcal{P})$ is given by
 \begin{equation*}
    \varphi(u):=\int_{\mathbb{R}^N}\bigl(|\nabla u|^2+V(x)u^2-F(x,u)\bigr)dx,\,\,\, u\in H^1(\mathbb{R}^N).
 \end{equation*}
By $(V_0)$ the Schr\"{o}dinger operator $-\Delta+V$ \big(in $L^2(\mathbb R^N)$\big) has purely continuous spectrum, and the space $H^1(\mathbb R^N)$ can be decomposed into $H^1(\mathbb R^N)=Y\oplus Z$ such that the quadratic form
\begin{equation*}
    u\in H^1(\mathbb R^N)\mapsto \int_{\mathbb R^N}\bigl(|\nabla u|^2+V(x)u^2\bigr)dx,
\end{equation*}
is negative and positive definite on $Y$ and $Z$ respectively. Both $Y$ and $Z$ are infinite-dimensional, so the functional $\varphi$ is strongly indefinite. This case has been of large interest in the last two decades, existence and multiplicity results have been obtained by different methods, see for instance \cite{K-S, Al-Li1, Al-Li2, S-W, P, W-Tro, P-P}. We prove in this paper that, under the above assumptions, $(\mathcal{P})$ has infinitely many large energy solutions.
\subsection{Noncooperative elliptic system}
\indent As the second application we study the following system:
\begin{equation*}
 (\mathcal{P}') \,\,\,\,\,\,\,\ \left\{                           \begin{array}{ll}
                                          \,\,\  \Delta u=H_u(x,u,v) \,\ \text{in} \,\ \Omega, & \hbox{} \\
                                          -\Delta v=H_v(x,u,v) \,\ \text{in} \,\ \Omega, & \hbox{} \\
                                          \,\,\  u=v=0 \,\ \text{on} \,\ \partial\Omega, & \hbox{}
                                          \end{array}
                                        \right.
\end{equation*}
where $\Omega$ is an open bounded subset of $\mathbb{R}^N$. It is well known that $(\mathcal{P}')$ has a variational structure, and the associated Euler-Lagrange functional is given by
\begin{equation*}
    \Phi(u,v):=\int_{\mathbb{R}^N}\Big(\frac{1}{2}|\nabla v|^2-\frac{1}{2}|\nabla u|^2-H(x,u,v)\Big)dx \,\,\,\,\ u,v\in H_0^1(\Omega).
\end{equation*}
%Since the quadratic part of $\varphi$ is strongly indefinite, the classical Fountain Theorem cannot be used.
By taking advantage of the new Fountain Theorem, we obtain infinitely many large energy solutions of $(\mathcal{P}')$, provided the following conditions are satisfied:\\
\indent $(h_1)$ \,\,\,\ $H\in\mathcal{C}^1(\overline{\Omega}\times\mathbb{R}\times\mathbb{R})$ and $H(x,0,0)=0$, $\forall x\in \overline{\Omega}.$ \\
 \indent  $(h_2)$ \,\,\,\ $\exists c>0$ such that
 \begin{equation*}
   |H_u(x,u,v)|+|H_v(x,u,v)|\leq c\big(1+|u|^{p-1}+|v|^{p-1}\big),
\end{equation*}
for all $x\in \mathbb{R}^N$ and $u,v\in\mathbb{R}$, where $p>2$ for $N=1,2$ and $2<p<2N/N-2$ if $N\geq3$.\\
\indent  $(h_3)$ \,\,\,\ $ 0< p H(x,u,v)\leq uH_u(x,u,v)+vH_v(x,u,v), \,\ for \,\ (u,v)\neq(0,0), \,\ \forall x\in\overline{\Omega}.$ \\
\indent $(h_4)$ \,\,\,\,\,\ $H(x,-u,-v)=H(x,u,v), \,\,\,\ \forall u,v\in\mathbb{R}, \forall x\in\overline{\Omega}$.\\
\paragraph{} The solutions of $(\mathcal{P}')$ represent the steady state solutions of reaction-diffusion
systems which are derived from several applications, such as mathematical biology
or chemical reactions (see \cite{Lupo} and reference therein). Recently the existence and multiplicity of solutions for noncooperative elliptic systems of the form $(\mathcal{P}')$ have been proved by several authors, see for instance \cite{B-C, F, Hira, F-D, Lupo} and references therein.
%%%%%%%%%%%%%%%%%%%%%%%%%
%%%%%%%%%%%%%%%%%%%%%%%%%%%%%%%%%%%%%%%%%%%%%%%%%%%%
%
\section{Kryszewski-Szulkin degree theory}
%%%%%%%%%%%%%%%%%%%%%%%%%%%%%%%%%%%%%%%%%%%%%%%%%%%%
%%%%%%%%%%%%%%%%%%%%%%%%%%%%%%%%%%%%%%%%%%%%%%%%%%%%
Let $(\mathcal{H},(\cdot),\|\cdot\|)$ be a separable Hilbert space. Let $(b_{j})_{j\geq0}$ be a total orthonormal sequence in $\mathcal{H}$ and let
\begin{equation*}
    \|u \|_1 \,  :=  \, \sum_{j=0}^{\infty} \frac{1}{2^{j+1}} \bigl|(u, \, b_{j})  \bigr|, \,\,\,\,\ u\in \mathcal{H}.
\end{equation*}
We will denote by $\sigma$ the topology generated by the norm $\| \cdot \|_1$. Clearly $\|y\|_1\leq\|y\|$ for every $y\in \mathcal{H}$, and moreover if $(y_n)$ is a bounded sequence in $\mathcal{H}$ then
\begin{equation*}
    y_{n} \rightharpoonup y \Longleftrightarrow  y_{n}\stackrel{\sigma}{\rightarrow} y.
\end{equation*}
\indent Let $U$ be an open bounded subset of $\mathcal{H}$ such that $\overline{U}$ is $\sigma-$closed. The following definitions are due to Kryszewski and Szulkin (see \cite{W}).
\begin{defin}\label{sigmaadm}
{\em   A map $f \, : \, \overline{U} \rightarrow \mathcal{H}$
is said to be $\sigma$-admissible if it meets the two following conditions.
\begin{enumerate}
\item{$f$ is $\sigma-$continuous,}
\item{each point $u \in U$ has a $\sigma-$neighborhood $N_{u}$ such that $(id -f)(N_{u} \cap U)$ is contained
in a finite-dimensional subspace of $\mathcal{H}.$}
\end{enumerate} }
\end{defin}
\begin{defin}\label{sigmaadmhom}
An application $h:[0,1]\times U\rightarrow \mathcal{H}$ is a $\sigma$-admissible homotopy if:
\begin{itemize}
  \item [(a)] $h^{-1}(0)\cap\big([0,1]\times \partial U\big)=\emptyset$,
  \item [(b)] $h$ is $\sigma$-continuous,
  \item [(c)] Every $(t,u)\in [0,1]\times U$ has a $\sigma$-neighborhood  $N_{(t,u)}$ such that $\big\{s-h(\tau,s)/(\tau,s)\in N_{(t,u)}\cap ([0,1]\times U)\big\}$ is contained in a finite-dimensional subspace of $\mathcal{H}$.
      \end{itemize}

\end{defin}
If $f$ is a $\sigma$-admissible map such that $0\notin f(\partial U)$, then $f^{-1}(0)$ is $\sigma-$closed and consequently, $\sigma-$compact.
Let us consider the covering of $f^{-1}(0)$ by sets $N_{u}$ which are $\sigma-$open neighborhoods of $u \in f^{-1}(0)$ such that $(id-f)(N_{u} \cap U)$ is contained in a finite-dimensional subspace of $\mathcal{H}.$ So, we can find $m$ points $u_{1}, \, ..., \, u_{m} \in f^{-1}(0) $ such that $ f^{-1}(0) \subset V:=\bigcup_{n=1}^{m} N_{u_{n}} \cap U$ and $(id-f)(N_{u_k} \cap U)$ is contained in a finite-dimensional subspace $F$ of $\mathcal{H}$, $\forall k=1, \,\ldots ,m$. The contraction and the excision properties of the Brouwer's degree imply that $deg_B(f_{|V\cap F},V\cap F)$ does not depend on the choice of $V$ and $F$, where $deg_B$ is the Brouwer's degree. This leads to the following definition due to Kryszewski and Szulkin:
\begin{defin}[Kyszewski-Szulkin degree]\label{deg}
Let $f$ be a $\sigma$-admissible map such that $0\notin f(\partial U)$. The degree of $f$ is defined by
\begin{equation*}
    deg(f,U):=deg_B(f_{|V\cap F},V\cap F),
\end{equation*}
 where $V$ and $F$ are defined above.
\end{defin}
\begin{prop}\label{deg1}\quad
\begin{enumerate}[(i)]
  \item If $y\in U$ then $deg(id-y,U)=1$.
  \item Let $f$ be a $\sigma$-admissible map such that $0\notin f(\partial U)$. If $deg(f,U)\neq0$ then there exists $u\in U$ such that $f(u)=0$.
  \item If $h$ is a $\sigma$-admissible homotopy, then $deg(h(t,\cdot),U)$ does not depend on the choice of $t$.
\end{enumerate}
\end{prop}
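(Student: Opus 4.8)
The plan is to derive all three statements by pushing everything down to the finite-dimensional reduction $f_{|V\cap F}$ used in Definition \ref{deg}, where the analogous properties of the Brouwer degree $deg_B$ are available, and by invoking the fact (established in the paragraph preceding Definition \ref{deg}) that the resulting value is independent of the admissible choice of $V$ and $F$. I recall also that $\overline{U}$ is bounded and $\sigma$-closed, hence $\sigma$-compact since on bounded sets $\sigma$ coincides with the weak topology, and that on any finite-dimensional subspace the $\sigma$-topology agrees with the norm topology, so the restricted maps are genuine continuous maps between pieces of Euclidean spaces and $deg_B$ makes sense.

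For (i), I take $f=id-y$ with $y\in U$. Then $f^{-1}(0)=\{y\}$ and $id-f$ is the constant map $y$, so one may choose $F$ to be any finite-dimensional subspace containing $y$ and $V=N_y\cap U$ for a $\sigma$-neighborhood $N_y$ of $y$; then $(id-f)(V)=\{y\}\subset F$. Since $y\in V\cap F$, the normalization property of the Brouwer degree gives $deg_B\big((id-y)_{|V\cap F},V\cap F\big)=1$, whence $deg(id-y,U)=1$. For (ii), by definition $deg(f,U)=deg_B(f_{|V\cap F},V\cap F)$; if this is nonzero, the solution property of the Brouwer degree yields a point $u\in V\cap F$ with $f(u)=0$, and since $V\cap F\subset V\subset U$ we obtain $u\in U$ with $f(u)=0$.

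The substance is (iii), and the main obstacle there is to produce a \emph{single} finite-dimensional subspace $F$ and a \emph{single} $\sigma$-open set that simultaneously reduce $h(t,\cdot)$ to a finite-dimensional map for every $t\in[0,1]$. The set $h^{-1}(0)$ is a $\sigma$-closed subset of the compact space $[0,1]\times\overline{U}$, hence compact; by condition (c) of Definition \ref{sigmaadmhom} it can be covered by finitely many neighborhoods $N_{(t_i,u_i)}$, and letting $W:=\bigcup_i N_{(t_i,u_i)}\cap([0,1]\times U)$ and $F$ the linear span of the finitely many finite-dimensional subspaces provided by those neighborhoods, one gets $s-h(\tau,s)\in F$ for every $(\tau,s)\in W$. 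Writing $W_t:=\{s\in U:(t,s)\in W\}$, condition (a) together with the covering shows $h(t,\cdot)^{-1}(0)\subset W_t$ and $(id-h(t,\cdot))(W_t)\subset F$, so $W_t$ and $F$ form an admissible choice in Definition \ref{deg} and hence $deg(h(t,\cdot),U)=deg_B\big(h(t,\cdot)_{|W_t\cap F},W_t\cap F\big)$. Finally I would restrict $h$ to $\Omega:=W\cap([0,1]\times F)$, an open bounded subset of $[0,1]\times F$ on which $h$ takes values in $F$ and is norm-continuous; the inclusion $h^{-1}(0)\cap([0,1]\times F)\subset\Omega$ shows that $h(t,\cdot)$ has no zero on the $F$-boundary of the slice $\Omega_t=W_t\cap F$, so homotopy invariance of the Brouwer degree yields that $t\mapsto deg_B(h(t,\cdot)_{|\Omega_t},\Omega_t)$ is constant, which is the claim. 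The one point requiring care is precisely this last step, since the slices $\Omega_t$ vary with $t$: it rests on the version of Brouwer homotopy invariance allowing a moving domain, or alternatively on first shrinking the covering neighborhoods to cylinders $I_i\times M_i$ so that $\Omega$ becomes a finite union of cylinders and the usual cylindrical homotopy invariance applies.
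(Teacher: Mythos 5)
The paper does not actually prove Proposition \ref{deg1} --- it refers the reader to \cite{W} --- and your argument is a correct reconstruction of the standard proof given there: reduce to a single finite-dimensional subspace $F$ via a finite subcover of the $\sigma$-compact zero set $h^{-1}(0)$, identify each $deg(h(t,\cdot),U)$ with a Brouwer degree over the slice $W_t\cap F$ using the excision/contraction well-definedness, and conclude by the moving-domain (generalized) homotopy invariance of $deg_B$. The two points needing care are exactly the ones you flag, and both are routine: the varying slices are handled by the generalized homotopy invariance (or your cylinder refinement), and the fact that $h$ maps $\overline{\Omega}$ continuously into $F$ in the norm topology follows from $\sigma$-continuity of $h$ together with the $\sigma$-closedness of the finite-dimensional subspace $F$, on which $\|\cdot\|_1$ and $\|\cdot\|$ are equivalent.
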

For the proof of Proposition $\ref{deg1}$, we refer the reader to \cite{W}.\\
 The following theorem was inspired by \cite{K-S} Theorem $2.4(iv)$.
\begin{theo}[Borsuk-Ulam theorem for admissible maps]\label{bu}
Let  $U$ be an
open bounded symmetric neighborhood of $0$ in $\mathcal{H}$ such that
$\overline{U}$ is $\sigma-$closed. Let $f:\overline{U}\rightarrow \mathcal{H}$ be a $\sigma$-admissible odd map.
If $f(\overline{U})$ is contained in a proper subspace of $\mathcal{H}$, then there exists $u_0\in \partial U$ such that $f(u_0)=0$.
\end{theo}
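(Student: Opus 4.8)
The plan is to argue by contradiction and to reduce the statement, via the very construction of the Kryszewski--Szulkin degree, to the classical finite-dimensional Borsuk theorem for the Brouwer degree. Assume that $f(u)\neq0$ for every $u\in\partial U$, so that $\deg(f,U)$ is well defined. Since $\overline U$ is $\sigma$-closed and $U$ is bounded, $f^{-1}(0)$ is $\sigma$-compact, and because $f$ is odd it is also symmetric; in particular $0=f(0)\in f^{-1}(0)$. Following the construction preceding Definition \ref{deg}, I would cover $f^{-1}(0)$ by finitely many $\sigma$-open sets $N_{u_k}$ on which $(id-f)(N_{u_k}\cap U)$ lies in a common finite-dimensional subspace $F$, and set $V=\bigcup_k N_{u_k}\cap U$. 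Replacing $V$ by $V\cap(-V)$, which still contains the symmetric set $f^{-1}(0)$ and hence by excision does not change the degree, I may assume $V$ symmetric; note that $F$, being a linear subspace, is automatically symmetric.

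The decisive point is to make the finite-dimensional model detect the hypothesis that $f(\overline U)$ lies in a proper subspace $W$ of $\mathcal H$. I would pick a vector $e\notin W$ and enlarge $F$ to $F':=F+\mathbb{R}e$, still a finite-dimensional (hence symmetric) subspace, now with $F'\not\subset W$. Because the degree is independent of the admissible choice of the finite-dimensional subspace, $\deg(f,U)=\deg_B\big(f_{|V\cap F'},V\cap F'\big)$. On the finite-dimensional space $F'$ the $\sigma$-topology coincides with the norm topology, so $V\cap F'$ is a bounded, open, symmetric neighborhood of $0$ in $F'$; moreover for $u\in V\cap F'$ one has $f(u)=u-(id-f)(u)\in F'$, so $f$ restricts to a continuous odd self-map of $\overline{V\cap F'}$, and $0\notin f\big(\partial(V\cap F')\big)$ since any boundary zero would belong to $f^{-1}(0)\subset V$ and thus to $V\cap F'$, a contradiction.

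Now I would play two elementary degree computations against each other. On one hand, Borsuk's theorem applied to the odd map $f_{|V\cap F'}$ yields that $\deg_B\big(f_{|V\cap F'},V\cap F'\big)$ is odd, hence nonzero. On the other hand, $f\big(\overline{V\cap F'}\big)\subset W\cap F'$, a proper subspace of $F'$; picking a point $p\notin W\cap F'$ close enough to $0$ that the segment $[0,p]$ avoids the compact set $f\big(\partial(V\cap F')\big)$, one gets $f^{-1}(p)\cap\overline{V\cap F'}=\emptyset$, and the local constancy of $q\mapsto\deg_B(f_{|V\cap F'},V\cap F',q)$ on $F'\setminus f(\partial(V\cap F'))$ gives $\deg_B\big(f_{|V\cap F'},V\cap F'\big)=\deg_B(f_{|V\cap F'},V\cap F',p)=0$. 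These two facts are incompatible, so the assumption $0\notin f(\partial U)$ is untenable and there exists $u_0\in\partial U$ with $f(u_0)=0$.

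I expect the main obstacle to be bookkeeping rather than conceptual: one must verify that all the reductions can be carried out \emph{symmetrically} and that enlarging $F$ to $F'$ keeps the restriction an odd self-map with no boundary zeros, so that Borsuk's theorem and the proper-subspace vanishing genuinely apply to the \emph{same} Brouwer degree. The only quantitative verification is that the auxiliary point $p\notin W$ can be chosen close enough to $0$ to lie in the same component of $F'\setminus f(\partial(V\cap F'))$ as $0$, which follows immediately from the compactness of $f(\partial(V\cap F'))$ together with $0\notin f(\partial(V\cap F'))$.
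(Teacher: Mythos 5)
Your proof is correct, and it reaches the contradiction by a different mechanism than the paper's. Both proofs open the same way: assume $0\notin f(\partial U)$ and symmetrize $V$. The paper then stays at the level of the Kryszewski--Szulkin degree: it fixes $z$ outside the proper subspace $W\supset f(\overline U)$, verifies that $h(t,u):=f(u)-tz$ is a $\sigma$-admissible homotopy (condition (a) of Definition \ref{sigmaadmhom} holds precisely because $tz\notin W$ for $t>0$), and concludes from Proposition \ref{deg1}(iii) that $\deg(f-z,U)=\deg(f,U)$, from the classical Borsuk theorem that this common value is nonzero, and from Proposition \ref{deg1}(ii) that $z\in f(U)$, contradicting $f(\overline U)\subset W$. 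You never invoke Proposition \ref{deg1}: you push everything down to the finite-dimensional level of the degree's construction, enlarging $F$ to $F'=F+\mathbb{R}e$ with $e\notin W$ (legitimate, by the independence of the degree on the choice of the finite-dimensional subspace), and you play two Brouwer-level computations against each other --- Borsuk makes $\deg_B\bigl(f_{|V\cap F'},V\cap F'\bigr)$ odd, while $f(\overline{V\cap F'})\subset W\cap F'\subsetneq F'$ plus local constancy of the Brouwer degree in the target point makes it zero. The two routes implement the same underlying idea (translate by a vector outside $W$), the paper through an infinite-dimensional admissible homotopy, you through the target point of the Brouwer degree. Your version buys self-containedness: it avoids checking $\sigma$-admissibility of a homotopy (which the paper leaves to the reader), and it makes explicit the finite-dimensional reduction that the paper's appeal to ``the classical Borsuk theorem'' silently requires --- namely exactly your verifications that $V\cap F'$ is a symmetric bounded open neighborhood of $0$ in $F'$, that the restriction of $f$ is an odd self-map of $F'$, and that it has no zeros on the relative boundary. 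The paper's version buys brevity, using only the stated properties of the degree rather than its construction; one further small point in your favor is that you justify the symmetrization of $V$ by excision, which the paper merely asserts.
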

\begin{proof}
Assume by contradiction that $f^{-1}(0)\cap \partial U=\emptyset$. Since $f$ is odd, we may assume that $V$ is a symmetric (i.e $-V=V$). Let $F$ be a proper subspace of $\mathcal{H}$ such that $f(\overline{U})\subset F$, and let  $z\in \mathcal{H}\backslash F$.
Define $h:[0,1]\times \overline{U}\rightarrow \mathcal{H}$ by $h(t,u):=f(u)-tz$.
One can easily verify that $h$ is a $\sigma$-admissible homotopy. By Proposition $\ref{deg1} (iii)$, $deg(h(0,\cdot),U)=deg(h(1,\cdot),U)$. The classical Borsuk theorem implies that $deg(h(0,\cdot),U)=deg(f,U)\neq0$, so $deg(h(1,\cdot),U)=deg(f-z,U)\neq0$. It then follows from Proposition $\ref{deg1} (ii)$ that there exists $u_0\in U$ such that $z=f(u_0)$, which is a contradiction since $z\in \mathcal{H}\backslash f(\overline{U})$.
\end{proof}
\indent Now we need to precise what kind of symmetries we will consider in the sequel.
 We recall that the action of a topological group $G$ on a normed vector space $(E,\|\cdot\|)$ is a continuous map $G\times E\rightarrow E$, $(g,u)\mapsto gu$, such that:
\begin{enumerate}[(1)]
  \item $eu=u$, $\forall u\in E$.
  \item $h(gu)=(hg)u$, $\forall u\in E,\,\ \forall h,g\in G$.
  \item The map $u\mapsto gu$ is linear for every $g\in G.$
\end{enumerate}
The action of $G$ is isometric if $\|gu\|=\|u\|$, $\forall u\in E,g\in G$. A subset $F$ of $E$ is invariant if $gF=F$ for every $g\in G$. A functional $\varphi:E\rightarrow \mathbb{R}$ is invariant if $\varphi(gu)=\varphi(u)$, for every $u\in E$ and $g\in G$. A map $f:X\rightarrow X$ is equivariant if $f(gu)=gf(u)$, for every $u\in E$ and $g\in G.$
\begin{defin}\label{defadmissible}
Let $G$ be a finite group acting on $\mathcal{H}$. The action of $G$ is said to be admissible if $\big\{u\in \mathcal{H}\, \bigl|\,gu=u, \forall g\in G\big\}=\{0\}$ and every $\sigma$-admissible and equivariant map $f:\overline{U}\rightarrow \mathcal{H}_0$, where $U$ is an open bounded invariant neighborhood of the origin in $\mathcal{H}$ such that $\overline{U}$ is $\sigma-$closed and $\mathcal{H}_0$ is a proper subspace of $\mathcal{H}$, has a zero on $\partial U$.
\end{defin}
 \begin{exa}
 Theorem $\ref{bu}$ implies that the antipodal action of $\mathbb{Z}_2 $ on every separable Hilbert space is admissible.
 \end{exa}

%
%%%%%%%%%%%%%%%%%%%%%%%%%%%%%%%%%%%%%%%%%%%%%%%%%%%%%%%%%%%%%%%%%%%%%%
%%%%%%%%%%%%%%%%%%%%%%%%%%%%%%%%%%%%%%%%%%%%%%%%%%%%%%%%%%%%%%%%%%%%%%
\section{A deformation lemma}
%%%%%%%%%%%%%%%%%%%%%%%%%%%%%%%%%%%%%%%%%%%%%%%%%%%%%%%%%%%%%%%%%%%%%%
%%%%%%%%%%%%%%%%%%%%%%%%%%%%%%%%%%%%%%%%%%%%%%%%%%%%%%%%%%%%%%%%%%%%%%
%
%
\indent Let $Y$ be a closed separable subspace of a Hilbert space $X$ endowed with the inner product $(\cdot)$ and the associated norm $\|\cdot \|$.
Let $P:X\rightarrow Y$ and $Q:X\rightarrow Y^{\perp}$ be the orthogonal projections.  Let $(\theta_j)$ be an orthonormal basis of $Y$. On $X$ we consider a new norm
\begin{equation}\label{e.2}
   \vvvert u\vvvert :=\max\Big(\sum_{j=0}^{\infty} \frac{1}{2^{j+1}} |(Pu, \, \theta_{j})  |,\|Q u\|\Big),
\end{equation}
 and we denote by $\tau$ the topology and all topological notions related to the topology generated by $\vvvert \cdot\vvvert$ \big(see \cite{K-S} or \cite{W}\big). It is clear that $\|Qu\|\leq\vvvert u\vvvert\leq\|u\|$. Moreover, if $(u_n)$ is a bounded sequence in $X$ then
\begin{equation*}
    u_n
\stackrel{\tau}{\rightarrow}u  \Longleftrightarrow Pu_n \rightharpoonup Pu \,\ and \,\ Qu_n \rightarrow Qu.
\end{equation*}
\indent We recall some standard notations:\\
 Let $S\subset X$ and $\varphi\in\mathcal{C}^1(X,{\mathbb R})$.  $dist(u,S):=\|u-S\|$, $dist_\tau(u,S):=\vvvert u-S\vvvert$, $S_\alpha:=\bigl\{u\in X\,\ \bigl| \,\ dist(u,S)\leq\alpha\bigr\}$, $\varphi^a:=\bigl\{u\in X\,\ \bigl| \,\ \varphi(u)\leq a\bigr\}$.\\
\indent The following lemma is somewhat a combination of Lemma $3.1$ and Lemma $6.8$ in \cite{W}.
\begin{lemme}[Deformation lemma]\label{d.l}
Assume that a finite group $G$ acts isometrically and $\tau$-isometrically on the Hilbert space $X$.
Assume also that $\varphi\in\mathcal{C}^1(X,{\mathbb R})$ is invariant and $\tau$-upper semicontinuous and $\nabla\varphi$ is weakly sequentially continuous. Let $S\subset X$ be invariant and $c\in{\mathbb R}$, $\epsilon,\delta>0$ such that
\begin{equation}\label{e.3}
    \forall u\in \varphi^{-1}([c-2\epsilon,c+2\epsilon])\cap S_{2\delta}, \, \|\varphi'(u)\|\geq \frac{8\epsilon}{\delta}.
\end{equation}
Then there exists $\eta \in\mathcal{C}([0,1]\times\varphi^{c+2\epsilon},X)$ such that:
\begin{itemize}
  \item [(i)] $\eta(t,u)=u$ if $t=0$ or if $u\notin\varphi^{-1}([c-2\epsilon,c+2\epsilon])\cap S_{2\delta},$
  \item [(ii)] $\eta(1,\varphi^{c+\epsilon}\cap S)\subset\varphi^{c-\epsilon},$
  \item [(iii)] $\|\eta(t,u)-u\|\leq \frac{\delta}{2}$ $\forall u\in \varphi^{c+2\epsilon},$ $\forall t\in[0,1],$
  \item [(iv)] $\varphi(\eta(\cdot,u))$ is non increasing, $\forall u\in \varphi^{c+2\epsilon}$,
  \item [(v)] Each point $(t,u)\in [0,1]\times\varphi^{c+2\epsilon}$ has a $\tau$-neighborhood $N_{(t,u)}$ such that $\big\{v-\eta(s,v)\, \bigl| \, (s,v)\in N_{(t,u)}\cap([0,1]\times\varphi^{c+2\epsilon})\bigr.\big\}$ is contained in a finite-dimensional subspace of $X$,
  \item [(vi)] $\eta$ is $\tau$-continuous,
  \item [(vii)] $\eta(t,\cdot)$ is equivariant $\forall t\in [0,1]$.
\end{itemize}
\end{lemme}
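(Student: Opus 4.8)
The plan is to realize $\eta$ as the flow of a suitably truncated, $G$-equivariant descent vector field, following the classical scheme of Willem (\cite{W}, Lemmas $3.1$ and $6.8$) but paying close attention to the $\tau$-topology. The genuinely delicate point is the simultaneous requirement that the field be locally Lipschitz for the norm $\|\cdot\|$ (so that the Cauchy problem is well posed) while being $\tau$-continuous and, locally in the $\tau$-topology, valued in finite-dimensional subspaces (so that (v)--(vi) hold); everything else reduces to the usual quantitative estimates.

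First I would construct the descent directions pointwise. Let $A:=\varphi^{-1}([c-2\epsilon,c+2\epsilon])\cap S_{2\delta}$, on which $\|\varphi'(u)\|\geq 8\epsilon/\delta$. For each $u_0\in A$ pick a fixed vector $w_0$ with $\|w_0\|\leq 1$ and $\varphi'(u_0)w_0>\tfrac{2}{3}\|\varphi'(u_0)\|$. Since $\nabla\varphi$ is weakly sequentially continuous and, on bounded sets, $\tau$-convergence implies weak convergence, the map $u\mapsto\varphi'(u)w_0=(\nabla\varphi(u),w_0)$ is $\tau$-sequentially continuous; hence there is a $\tau$-open neighborhood $N_{u_0}$ of $u_0$ on which $\varphi'(u)w_0>4\epsilon/\delta$. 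These $N_{u_0}$ cover $A$. Because $\vvvert\cdot\vvvert$ is a metric, the space is paracompact, so I would extract a $\tau$-locally finite refinement and a subordinate partition of unity $(\lambda_i)$, each $\lambda_i$ built from $\vvvert\cdot\vvvert$-distance functions. Such $\lambda_i$ are Lipschitz for $\vvvert\cdot\vvvert$, hence $\tau$-continuous and, since $\vvvert\cdot\vvvert\leq\|\cdot\|$, locally Lipschitz for $\|\cdot\|$. Setting $V:=\sum_i\lambda_i w_i$ produces a field that is norm-locally Lipschitz, $\tau$-continuous, satisfies $\|V\|\leq 1$ and $\varphi'(u)V(u)>4\epsilon/\delta$ on $A$, and, crucially, is valued in a fixed finite-dimensional subspace on each member of the locally finite cover.

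Next I would symmetrize and truncate. Averaging, $\bar V(u):=\tfrac{1}{|G|}\sum_{g\in G}g^{-1}V(gu)$ yields an equivariant field; since the action is isometric and $\tau$-isometric and $\varphi$ is invariant (so $\nabla\varphi(gu)=g\nabla\varphi(u)$ and $A$ is invariant), $\bar V$ retains $\|\bar V\|\leq1$, the descent bound, $\tau$-continuity, and local finite-dimensionality (a finite sum of finite-dimensional pieces). Let $\chi\colon X\to[0,1]$ be a $G$-invariant, locally Lipschitz cutoff equal to $1$ on $\varphi^{-1}([c-\epsilon,c+\epsilon])\cap S_{\delta}$ and vanishing off $A$ (here the $\tau$-upper semicontinuity of $\varphi$ is what makes the truncation compatible with the $\tau$-topology near the active region), and set $W:=-\tfrac{\delta}{2}\,\chi\,\bar V$, extended by $0$. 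Then $\|W\|\leq\delta/2$ and $W$ is bounded and norm-locally Lipschitz, so the Cauchy problem $\partial_t\eta=W(\eta)$, $\eta(0,\cdot)=\mathrm{id}$, has a unique solution on $[0,1]\times\varphi^{c+2\epsilon}$, which stays in the domain by (iv).

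Finally I would verify (i)--(vii). Properties (i), (iii) (from $\|\eta(t,u)-u\|\leq\int_0^t\|W\|\,ds\leq\delta/2$) and (iv) (from $\tfrac{d}{dt}\varphi(\eta)=\varphi'(\eta)W(\eta)\leq0$) are immediate. For (ii): if $u\in\varphi^{c+\epsilon}\cap S$ and $\varphi(\eta(\cdot,u))$ stayed above $c-\epsilon$, the orbit would remain in $S_{\delta/2}\subset S_{\delta}$ with $\varphi\in(c-\epsilon,c+\epsilon]$, so $\chi\equiv1$ along it and $\tfrac{d}{dt}\varphi(\eta)\leq-\tfrac{\delta}{2}\cdot\tfrac{4\epsilon}{\delta}=-2\epsilon$, forcing $\varphi(\eta(1,u))\leq c+\epsilon-2\epsilon=c-\epsilon$, a contradiction. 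Property (vii) follows from equivariance of $W$ and uniqueness, since $t\mapsto g\eta(t,u)$ solves the same Cauchy problem with datum $gu$. The main obstacle is (v)--(vi): the flow is produced in the norm topology, yet one needs $\tau$-control. Here I would exploit that $W$, being built from the $w_i$ on a $\tau$-locally finite cover, has the property that each point possesses a $\tau$-neighborhood on which $W$ takes values in one fixed finite-dimensional subspace $F$; covering the compact orbit $\{\eta(s,u):s\in[0,1]\}$ by finitely many such neighborhoods and writing $v-\eta(s,v)=-\int_0^s W(\eta)\,d\tau$ confines the displacements near $(t,u)$ to a finite-dimensional space, giving (v), while the same local structure together with the $\tau$-continuity of $W$ and a Gronwall estimate on the strongly convergent $Q$-component yields the $\tau$-continuity (vi).
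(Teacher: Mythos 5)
Your proposal is correct and follows essentially the same route as the paper's proof: a descent vector field assembled from a $\tau$-locally finite partition of unity subordinate to $\tau$-open neighborhoods on which fixed directions work (hence locally finite-dimensional values), symmetrized by averaging over $G$, cut off between $\varphi^{-1}([c-\epsilon,c+\epsilon])\cap S_{\delta}$ and the complement of $\varphi^{-1}([c-2\epsilon,c+2\epsilon])\cap S_{2\delta}$, and then integrated, with the same quantitative bookkeeping (displacement at most $\delta/2$, descent $2\epsilon$) and the same appeal to uniqueness for equivariance and to the Willem-type argument for $(v)$--$(vi)$. The only point where the paper is more explicit than you is the cutoff: it takes $\psi(u)=dist_{\tau}\bigl(u,V\backslash A\bigr)\Bigl[dist_{\tau}\bigl(u,V\backslash A\bigr)+dist_{\tau}\bigl(u,B\bigr)\Bigr]^{-1}$, whose $\tau$-Lipschitz continuity and invariance under the $\tau$-isometric action directly yield the $\tau$-continuity and equivariance of the truncated field, whereas you merely postulate such a $\chi$.
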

\begin{proof}
Let us define
\begin{equation*}
    w(v):=2\|\nabla \varphi(v)\|^{-2}\nabla \varphi(v), \,\,\,\,\ \forall v\in \varphi^{-1}\big([c-2\epsilon,c+2\epsilon]\big).
\end{equation*}
Since $\nabla \varphi$ is weakly sequentially continuous, for every $v\in \varphi^{-1}\big([c-2\epsilon,c+2\epsilon]\big)$ there exists a $\tau$-open invariant neighborhood $N_v$ of $v$ such that $(\nabla \varphi(u),w(v))>1$ $\forall u\in N_v$. Since $\varphi$ is $\tau$-upper semicontinuous, $\widetilde{N}:=\varphi^{-1}\big(]-\infty,c+2\epsilon[\big)$ is $\tau$-open. It follows then that the family
\begin{equation*}
    \mathcal{N}=\big\{N_v\, \bigl| \, c-2\epsilon\leq\varphi(v)\leq c+2\epsilon\bigr.\big\}\cup\widetilde{N}
\end{equation*}
is a $\tau$-open covering of $\varphi^{c+2\epsilon}$. Since $(\varphi^{c+2\epsilon},\tau)$ is metric, and hence paracompact, there exists a $\tau$-locally finite $\tau$-open covering $\mathcal{M}:=\big\{M_i:i\in I\big\}$ of $\varphi^{c+2\epsilon}$ finer than $\mathcal{N}$. Define
$$V:=\bigcup\limits_{i\in I}M_i.$$
For every $i\in I$ we have only the possibilities $M_i\subset N_v$ for some $v$ or $M_i\subset \widetilde{N}$. In the first case we define $v_i:=w(v)$ and in the second case $v_i:=0$. Let $\{\lambda_i\, \bigl | \, i\in I \bigr. \}$ be a $\tau$-Lipschitz continuous partition of unity subordinated to $\mathcal{M}$ and define on $V$
$$h(u):=\sum\limits_{i\in I}\lambda_i(u)v_i.$$
The map $h$ satisfies the following properties (see Lemma $6.7$ of \cite{W}):
\begin{itemize}
  \item [(a)] $h$ is $\tau$-locally Lipschitz continuous and locally Lipschitz continuous,
  \item [(b)] each point $u\in V$ has a $\tau$-neighborhood $V_u$ such that $h(V_u)$ is contained in a finite dimensional subspace of $X$,
  \item [(c)] $(\nabla\varphi(u),h(u))\geq0$ $\forall u\in V$,
  \item [(d)] $\forall u\in \varphi^{-1}\bigl([c-2\epsilon,c+2\epsilon]\bigr)$, $\bigl(\nabla\varphi(u),h(u)\bigr)>1$.
\end{itemize}
Let us define the equivariant vector field $\widetilde{h}$ on $V$ by:
 $$\widetilde{h}(u):=\frac{1}{|G|} \sum\limits_{g\in G}g^{-1}h(gu).$$
 We claim that $\widetilde{h}$ satisfies properties $(a)$, $(b)$, $(c)$ and $(d)$ above.
In fact, since $\varphi$ is invariant we have
\begin{equation*}
    \bigl(\nabla\varphi(gu),v\bigr)=\bigl(\nabla\varphi(u),g^{-1}v\bigr) \,\ \forall g\in G\,\ \forall u,v\in X.
\end{equation*}
Thus
\begin{align*}
% \nonumber to remove numbering (before each equation)
  \bigl(\nabla\varphi(gu),\widetilde{h}(u)\bigr) &= \frac{1}{|G|}\sum\limits_{g\in G}\bigl(\nabla\varphi(u),g^{-1}h(gu)\bigr) \\
   &= \frac{1}{|G|}\sum\limits_{g\in G}\bigl(\nabla\varphi(gu),h(gu)\bigr),
\end{align*}
so that $\widetilde{h}$ satisfies $(c)$ and $(d)$. Let $u\in V$, for every $g\in G$ there exists a $\tau-$neighborhood $V_{gu}$ of $gu$ such that $h(V_{gu})$ is contained in a finite dimensional subspace of $X$, consequently $W_u:=\bigcup\limits_{g\in G}V_{gu}$ is a $\tau-$neighborhood of $u$ such that $\widetilde{h}(W_u)$ is contained in a finite dimensional subspace of $X$ and then $\widetilde{h}$ satisfies $(b)$. Finally $\widetilde{h}$ obviously satisfies $(a)$. \\

Let us define
\begin{equation*}
    A:=\varphi^{-1}\bigl([c-2\epsilon,c+2\epsilon]\bigr)\cap S_{2\delta},
\end{equation*}
\begin{equation*}
    B=:\varphi^{-1}\bigl([c-\epsilon,c+\epsilon]\bigr)\cap S_{\delta},
\end{equation*}
\begin{equation*}
    \psi(u):=dist_{\tau}\bigl(u,V\backslash A\bigr)\Big[dist_{\tau}\big(u,V\backslash A\big)+dist_{\tau}\big(u,B\big)\Big]^{-1} \,\,\,\,\,\ on \,\,\ V,
\end{equation*}
and
\begin{equation*}
    f(u):=\psi(u)\widetilde{h}(u), \,\,\,\ u\in V.
\end{equation*}
It is clear that $f$ is $\tau$-locally Lipschitz, $\tau$-continuous, continuous, locally Lipschitz and equivariant.
By assumption $(\ref{e.3})$, $\|w(u)\|\leq \frac{\delta}{4\epsilon}$, which implies $\|f(u)\|\leq \frac{\delta}{4\epsilon}$ on $V$.\\
For each $u\in \varphi^{c+2\epsilon}$, the Cauchy problem
$$\left\{
    \begin{array}{ll}
      \frac{d}{dt}\mu(t,u)=-f(\mu(t,u)) & \hbox{} \\
      \mu(0,u)=u  & \hbox{}
    \end{array}
  \right.
$$
has a unique solution $\mu(\cdot,u)$ defined on ${\mathbb R}^+$. Moreover $\mu$ is continuous on ${\mathbb R}^+\times\varphi^{c+2\epsilon}$.\\
Since $\|f(u)\|\leq \frac{\delta}{4\epsilon}$, we have for $t\geq0$
\begin{equation*}
    \|\mu(t,u)-u\|\leq\frac{\delta t}{4\epsilon}.
\end{equation*}
We also have
\begin{align*}
% \nonumber to remove numbering (before each equation)
  \frac{d}{dt}\varphi(\mu(t,u)) &= \Big(\nabla\varphi(\mu(t,u)),\frac{d}{dt}\mu(t,u)\Big) \\
   &= -\Big(\nabla\varphi(\mu(t,u)),f(\mu(t,u)\Big)\\
   &= -\psi(\mu(t,u))\Big(\nabla\varphi(\mu(t,u)),\widetilde{h}(\mu(t,u)\Big)\leq0
\end{align*}
 It is easy to verify that if we define $\eta$ on $[0,1]\times \varphi^{c+2\epsilon}$ by $\eta(t,u):=\mu(2\epsilon t,u)$, then $(i)$,  $(ii)$, $(iii)$ and  $(iv)$ are satisfied.
The proof of $(v)$ and $(vi)$ is the same as that of $b)$ and $c)$ in Lemma $6.8$ of \cite{W} (with $T=2\epsilon$).
Since $f$ is equivariant, $(vii)$ is a direct consequence of the existence and the uniqueness of the solution of the above Cauchy problem.
\end{proof}

%%%%%%%%%%%%%%%%%%%%%%%%%%%%%%%%%%%%%%%%%%%%%%%%%%%%%%%%%%%%%%%%%%%%%%
%%%%%%%%%%%%%%%%%%%%%%%%%%%%%%%%%%%%%%%%%%%%%%%%%%%%%%%%%%%%%%%%%%%%%%
\section{An infinite-dimensional version of the fountain theorem}
%%%%%%%%%%%%%%%%%%%%%%%%%%%%%%%%%%%%%%%%%%%%%%%%%%%%%%%%%%%%%%%%%%%%%
%%%%%%%%%%%%%%%%%%%%%%%%%%%%%%%%%%%%%%%%%%%%%%%%%%%%%%%%%%%%%%%%%%%%%%
%
%
In the sequel we assume that $ \displaystyle Z:=Y^\perp=\overline{\bigoplus_{j=0}^\infty\mathbb{R} e_j }$.\\
We use the following notation:
$$ \displaystyle Y_k:=Y\oplus(\bigoplus_{j=0}^k{\mathbb R} e_j), \,\,\,\,\,\,\,\,\,\,\,\,\,\,\ Z_k:=  \overline{\bigoplus_{j=k}^\infty\mathbb{R} e_j }, $$
$$ \displaystyle B_k:=\{u\in Y_k \, \bigl | \, ||u||\leq \rho_k \bigr.\}, \,\,\,\ N_k:=\{u\in Z_k \, \bigl | \, ||u||=r_k \bigr. \} \,\,\ where\,\,\,\ 0<r_k< \rho_k, \,\ k\geq2.   $$
 We denote $P_k:X\rightarrow Y_k$ and $Q_k:X\rightarrow Z_k$ the orthogonal projections. We define
\begin{equation*}
    e'_j:=\left\{
            \begin{array}{ll}
              e_j \,\,\,\ for \,\,\,\ j=0,1,...,k & \hbox{} \\
             \theta_{j-k-1} \,\,\,\ for \,\,\,\ j\geq k+1,  & \hbox{}
            \end{array}
          \right.
\end{equation*}
\big(where $(\theta_j)$ is the orthonormal basis of $Y$ we chose at the beginning of section $2$\big).  We can define a new norm on $X$ by setting
\begin{equation*}
   \vvvert u\vvvert_k=\max\Big(\sum\limits_{j=0}^{\infty}\frac{1}{2^{j+1}}|(P_ku,e'_j)|,||Q_{k+1}u||\Big).
   \end{equation*}
\begin{rem}\label{p.1}
\textnormal{Since a direct calculation shows that}
\begin{align*}
    \vvvert u\vvvert_k\leq\frac{3}{2}\vvvert u\vvvert \,\,\,\ \textnormal{and} \,\,\,\ \vvvert u \vvvert\leq2^{k+1}\vvvert u \vvvert _k,\,\,\, \forall u\in Y_k,
\end{align*}
\textnormal{then for every $k$, the norms $\vvvert \cdot \vvvert$ and $\vvvert \cdot\vvvert _k$ are equivalent on $Y_k$. In addition, it is easy to show that the projections $P_k$ are $\tau$-continuous, for every $k$.}
\end{rem}

\begin{lemme}[Intersection lemma]\label{i.l}
Under assumption $(A_1),$ let $\gamma \,
: \, B_{k} \rightarrow X $ such that
\begin{itemize}
  \item [(a)] $\gamma$ is equivariant and $\gamma \mid_{\partial B_{k}} = id,$
  \item [(b)] $\gamma$ is $\tau$-continuous,
  \item [(c)] every $u\in int (B_k)$ has a $\tau$-neighborhood $N_u$ in $Y_k$ such that $(id-\gamma)\bigl(N_u\cap int(B_k)\bigr)$ is contained in a finite-dimensional subspace of $X$.
\end{itemize}
 Then $\gamma (B_{k})
\cap N_{k} \neq \emptyset.$
\end{lemme}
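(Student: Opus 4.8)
The plan is to deduce the intersection from a single application of the Borsuk--Ulam theorem for admissible maps, carried out in the separable Hilbert space $\mathcal{H}:=Y_k$ endowed with the total orthonormal sequence $(e'_j)$. By Remark~\ref{p.1} the corresponding $\sigma$-topology agrees on $Y_k$ with the $\tau$-topology, and $Y_{k-1}$ is a proper, $G$-invariant subspace of $Y_k$, the missing direction being $\mathbb{R}e_k$. I work with the orthogonal splitting $X=Y_{k-1}\oplus Z_k$, so that $P_{k-1}+Q_k=\mathrm{id}$ and $Q_ku\in\mathbb{R}e_k$ for every $u\in Y_k$. The first step is to introduce the test neighbourhood
$$ U:=\bigl\{\,u\in Y_k \ \big|\ \|u\|<\rho_k \ \text{ and }\ \|Q_k\gamma(u)\|<r_k \,\bigr\}. $$
Because the $\tau$-topology is coarser than the norm topology, the $\tau$-continuity of $\gamma$ (hypothesis (b)) makes $u\mapsto\|Q_k\gamma(u)\|$ norm-continuous, so $U$ is norm-open; it is bounded (being contained in $B_k$), it is $G$-invariant since the action is isometric and leaves $Z_k$ invariant, and it contains $0$ because equivariance together with $\mathrm{Fix}(G)=\{0\}$ forces $\gamma(0)=0$. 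One also checks that $\overline{U}$ is $\sigma$-closed, using that $\|\cdot\|$ is $\tau$-lower semicontinuous on the bounded set $B_k$ and that $u\mapsto\|Q_k\gamma(u)\|$ is $\tau$-continuous.

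Next I set $f:=P_{k-1}\circ\gamma:\overline{U}\to Y_{k-1}$ and verify that it is an equivariant $\sigma$-admissible map into the proper subspace $Y_{k-1}$. Equivariance is inherited from $\gamma$ and the invariance of $Y_{k-1}$; $\sigma$-continuity follows from the $\tau$-continuity of $\gamma$ and of $P_{k-1}$ (Remark~\ref{p.1}). For the finite-dimensional range condition, the identity
$$ (\mathrm{id}-f)(u)=u-P_{k-1}\gamma(u)=P_{k-1}(\mathrm{id}-\gamma)(u)+Q_ku, \qquad u\in Y_k, $$
shows that on a $\tau$-neighbourhood $N_u$ as furnished by hypothesis (c) the set $(\mathrm{id}-f)(N_u\cap U)$ lies in $P_{k-1}(F)+\mathbb{R}e_k$, which is finite-dimensional. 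Assumption $(A_1)$ (through Definition~\ref{defadmissible}, applied to the admissible action of $G$ on $Y_k$) then produces a point $u_0\in\partial U$ with $f(u_0)=P_{k-1}\gamma(u_0)=0$; consequently $\gamma(u_0)=Q_k\gamma(u_0)\in Z_k$.

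It remains to show that $\gamma(u_0)\in N_k$, i.e.\ that $\|Q_k\gamma(u_0)\|=r_k$. Since $U$ is the intersection of the two norm-open sets $\{\|u\|<\rho_k\}$ and $\{\|Q_k\gamma(u)\|<r_k\}$, every boundary point satisfies $\|u_0\|=\rho_k$ (with $\|Q_k\gamma(u_0)\|\le r_k$) or $\|Q_k\gamma(u_0)\|=r_k$ (with $\|u_0\|\le\rho_k$). I rule out the first possibility: if $\|u_0\|=\rho_k$ then $u_0\in\partial B_k$, so $\gamma(u_0)=u_0$ by hypothesis (a), and $P_{k-1}\gamma(u_0)=0$ forces $u_0\in\mathbb{R}e_k$; but then $\|Q_k\gamma(u_0)\|=\|u_0\|=\rho_k>r_k$, contradicting $\|Q_k\gamma(u_0)\|\le r_k$. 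Hence $\|Q_k\gamma(u_0)\|=r_k$, and together with $\gamma(u_0)\in Z_k$ this gives $\gamma(u_0)\in N_k$.

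The step I expect to be the real obstacle is conceptual, not computational. The naive finite-type map $u\mapsto\bigl(P_{k-1}\gamma(u),\,\|Q_k\gamma(u)\|-r_k\bigr)$ into $Y_{k-1}\oplus\mathbb{R}$ has \emph{vanishing} Kryszewski--Szulkin degree, because the two antipodal candidates $\pm r_k e_k$ for the intersection contribute opposite local degrees that cancel; thus an ordinary degree count cannot detect the intersection. This is precisely where the equivariance of $\gamma$ and the admissibility of the $G$-action are indispensable, and it is what dictates routing the argument through the odd map $P_{k-1}\circ\gamma$ on the symmetric domain $U$. The surviving difficulties are then purely technical, namely the verification that $\overline{U}$ is $\sigma$-closed and that $f$ satisfies the finite-dimensional range requirement of $\sigma$-admissibility.
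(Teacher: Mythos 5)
Your proposal is correct and follows essentially the same route as the paper's own proof: both construct a bounded, symmetric, open neighborhood $U$ of $0$ in $Y_k$ out of $\gamma$, show that $P_{k-1}\gamma:\overline{U}\rightarrow Y_{k-1}$ is an equivariant $\sigma$-admissible map into a proper subspace, and invoke the admissibility of the $G$-action (the Borsuk--Ulam property) to obtain $u_0\in\partial U$ with $P_{k-1}\gamma(u_0)=0$, hence $\gamma(u_0)\in Z_k$ with $\gamma(u_0)\in N_k$. The only difference is technical rather than conceptual: the paper takes $U=\bigl\{u\in B_k \ \big|\ \|\gamma(u)\|<r_k\bigr\}$, whereas you use the condition $\|Q_k\gamma(u)\|<r_k$, which, since $u\mapsto Q_k\gamma(u)$ is $\tau$-to-norm continuous, makes the openness of $U$ and the final identification $\|Q_k\gamma(u_0)\|=r_k$ somewhat more transparent than in the paper's compressed ending.
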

\begin{proof}
Let $U=\{u\in B_k \, \bigl | \, ||\gamma(u)||<r_k\bigr.\}$. Since $\rho_k>r_k$ and $\gamma(0)=0$, $U$ is an open bounded and invariant neighborhood of $0$ in $Y_k$. It is clear that $B_k$ is $\tau$-closed, so we deduce from (b) that $\overline{U}$ is also $\tau$-closed. Consider the equivariant map $$P_{k-1}\gamma:\overline{U}\rightarrow Y_{k-1}.$$
\begin{enumerate}[(i)]
  \item $P_{k-1}\gamma$ is $\tau$-continuous. In fact, if $u_n\stackrel{\tau}{\rightarrow} u$, then from $(b)$ $\gamma(u_n)\stackrel{\tau}{\rightarrow}\gamma(u)$ and by Remark $\ref{p.1}$ we have
   $\vvvert P_{k-1}(\gamma(u_n)-\gamma(u))\vvvert \rightarrow 0$ as $n\rightarrow\infty$.
  \item Let $u\in U$. From $(c)$ $u$ has a $\tau$-neighborhood $N_u$ such that $(id-\gamma)(N_u\cap U)\subset W$, where $W$ is a finite-dimensional subspace of $X$. Let $v\in N_u\cap U\subset Y_k=Y_{k-1}\oplus {\mathbb R} e_k$, then $(id-P_{k-1}\gamma)(v)=P_{k-1}(v-\gamma(v))+\lambda e_k\in W+{\mathbb R} e_k$ which is finite-dimensional.
\end{enumerate}
Thus $P_{k-1}\gamma:\overline{U}\rightarrow Y_{k-1}$ is $\sigma-$admissible \big(in the sense of Definition $\ref{sigmaadm}$, where the pair $\big(\mathcal{H},(b_j)\big)$ in section $2$ is replaced by the pair $\big(Y_k,(e'_j)\big)$\big). Since the action of $G$ on $Y_k$ is admissible, there exists $u_0\in \partial U$ such that $P_{k-1}\gamma(u_0)=0$. This ends the proof of the lemma since $X=Y_{k-1}\oplus Z_k$.
\end{proof}
\begin{theo}\label{tm}
Under assumption $(A_1),$ let $\varphi\in\mathcal{C}^1(X,{\mathbb R})$ be invariant and $\tau$-upper semicontinuous such that
$\nabla\varphi$ is weakly sequentially continuous.
For $k \geq 2,$ define
\begin{equation*}
    a_{k} := \sup_{ \substack{ u \in Y_{k} \\ \|u \|= \rho_{k}
}} \varphi(u),
\end{equation*}
\begin{equation*}
    b_{k} := \inf_{ \substack{u \in Z_{k} \\ \| u \| = r_{k}}} \varphi(u),
\end{equation*}
\begin{equation*}
    c_{k} :=  \inf_{\gamma \in \Gamma_{k}} \sup_{u\in B_{k}} \varphi
\bigl( \gamma(u)  \bigr),
\end{equation*}
where
\begin{align*}
% \nonumber to remove numbering (before each equation)
  \Gamma_{k}  &:=  \Big\{ \gamma:B_{k} \rightarrow X \,\ \bigl | \,\ \gamma \,\,\ \textnormal{is equivariant},\, \tau-\textnormal{continuous and } \gamma \mid_{\partial B_{k}} = id. \bigr.& \\
  &\textnormal{Every }u\in int (B_k)\, \textnormal{has a }\tau-\textnormal{neighborhood } N_u \, \textnormal{in } Y_k \, \textnormal{such that } (id-\gamma)\bigl(N_u\cap int(B_k)\bigr)\\
  & \textnormal{is contained in a finite-dimensional subspace of }X.\,\, \textnormal{Futhermore} \,\,\ \varphi(\gamma(u))\leq\varphi(u)\,\ \forall u\in B_k. \Big\},&
\end{align*}
and
\begin{equation*}
    d_k:=\displaystyle \sup_{\substack{u \in Y_k \\ \|u\| \leq \rho_k}} \varphi(u).
\end{equation*}

If
\begin{equation*}
    d_k<\infty \,\,\,\, \textnormal{and} \,\,\,\, b_k>a_k,
\end{equation*}

then $c_{k} \geq b_{k}$ and, for every $\epsilon \in \left]0, \,
(c_{k} - a_{k})/2 \right[,$ $\delta>0$ and $\gamma \in \Gamma_{k}$
such that
\begin{equation}\label{e.6}
    \sup_{B_{k}} \varphi \circ \gamma \leq c_{k}+\epsilon,
\end{equation}

there exists $u \in X$ such that
\begin{enumerate}
\item{$ c_{k}-2 \epsilon \leq \varphi (u) \leq c_{k} + 2 \epsilon,   $  }
\item{$\textnormal{dist}(u, \, \gamma(B_{k})) \leq 2 \delta,   $}
\item{$\|\varphi'(u)  \| \leq 8 \epsilon / \delta   .$   }
\end{enumerate}

\end{theo}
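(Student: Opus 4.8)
The plan is to separate the statement into its two assertions: the inequality $c_k\ge b_k$, which I would obtain from the Intersection Lemma \ref{i.l}, and the gradient estimate (1)--(3), which I would establish by contradiction using the Deformation Lemma \ref{d.l}. First I would record that $\Gamma_k\neq\emptyset$, since $id\in\Gamma_k$, and that every $\gamma\in\Gamma_k$ satisfies hypotheses (a), (b), (c) of Lemma \ref{i.l} by the very definition of $\Gamma_k$. Hence $\gamma(B_k)\cap N_k\neq\emptyset$ for each $\gamma\in\Gamma_k$; picking $w\in\gamma(B_k)\cap N_k$ gives $\sup_{u\in B_k}\varphi(\gamma(u))\ge\varphi(w)\ge b_k$, and taking the infimum over $\gamma$ yields $c_k\ge b_k$. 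I would also note here that $id\in\Gamma_k$ forces $c_k\le\sup_{B_k}\varphi=d_k<\infty$, so that $c_k$ is finite and, together with $c_k\ge b_k>a_k$, the interval $]0,(c_k-a_k)/2[$ is nonempty; this is precisely where the hypothesis $d_k<\infty$ is used.

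For the gradient estimate I would fix $\epsilon\in]0,(c_k-a_k)/2[$, $\delta>0$ and $\gamma\in\Gamma_k$ satisfying $(\ref{e.6})$, and suppose that no $u$ satisfies (1)--(3) simultaneously. Setting $S:=\gamma(B_k)$, which is invariant because $\gamma$ is equivariant and $B_k$ is invariant, the negation says exactly that every $u\in\varphi^{-1}([c_k-2\epsilon,c_k+2\epsilon])\cap S_{2\delta}$ has $\|\varphi'(u)\|>8\epsilon/\delta$, i.e.\ hypothesis $(\ref{e.3})$ of Lemma \ref{d.l} holds with $c=c_k$. This furnishes a deformation $\eta$ with properties (i)--(vii). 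I would then define $\widetilde\gamma(u):=\eta(1,\gamma(u))$ on $B_k$, which is legitimate since $(\ref{e.6})$ places $\gamma(B_k)$ inside the domain $\varphi^{c_k+2\epsilon}$ of $\eta$, and aim to prove both that $\widetilde\gamma\in\Gamma_k$ and that $\sup_{B_k}\varphi\circ\widetilde\gamma\le c_k-\epsilon$, which contradicts the definition of $c_k$ and closes the argument.

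The membership $\widetilde\gamma\in\Gamma_k$ I would check term by term: equivariance follows from that of $\gamma$ and of $\eta(1,\cdot)$ via (vii); $\tau$-continuity follows by composing the $\tau$-continuity of $\gamma$ with (vi); and the descent $\varphi(\widetilde\gamma(u))\le\varphi(\gamma(u))\le\varphi(u)$ comes from (iv) together with $\eta(0,\cdot)=id$ in (i) and $\gamma\in\Gamma_k$. For the boundary condition, if $u\in\partial B_k$ then $\gamma(u)=u$ and $\varphi(u)\le a_k<c_k-2\epsilon$ (this is where $\epsilon<(c_k-a_k)/2$ enters), so $u\notin\varphi^{-1}([c_k-2\epsilon,c_k+2\epsilon])$ and (i) gives $\eta(1,u)=u$, whence $\widetilde\gamma(u)=u$. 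The estimate $\sup_{B_k}\varphi\circ\widetilde\gamma\le c_k-\epsilon$ is then immediate from (ii): each $\gamma(u)$ lies in $\varphi^{c_k+\epsilon}\cap S$, so $\eta(1,\gamma(u))\in\varphi^{c_k-\epsilon}$.

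The step I expect to be the main obstacle is verifying the finite-dimensionality condition (the third clause defining $\Gamma_k$) for $\widetilde\gamma$. I would split $(id-\widetilde\gamma)(v)=(id-\gamma)(v)+\bigl(\gamma(v)-\eta(1,\gamma(v))\bigr)$. The first summand lies in a fixed finite-dimensional subspace on a $\tau$-neighborhood of $u$ in $Y_k$ by clause (c) for $\gamma$; for the second, I would use the $\tau$-continuity of $v\mapsto(1,\gamma(v))$ to pull back the $\tau$-neighborhood $N_{(1,\gamma(u))}$ provided by property (v), keeping $v\in int(B_k)$ so that $\gamma(v)\in\varphi^{c_k+2\epsilon}$ and (v) applies, so that $\gamma(v)-\eta(1,\gamma(v))$ remains in a fixed finite-dimensional subspace. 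Intersecting the two neighborhoods yields a single $\tau$-neighborhood on which $(id-\widetilde\gamma)$ takes values in the (finite-dimensional) sum of the two subspaces. The delicate bookkeeping is ensuring every neighborhood stays inside $Y_k\cap int(B_k)$ and that the composed map lands in $[0,1]\times\varphi^{c_k+2\epsilon}$ before invoking (v).
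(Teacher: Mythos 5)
Your proposal is correct and follows essentially the same route as the paper's own proof: the Intersection Lemma \ref{i.l} gives $c_k\geq b_k$, and the contradiction argument applies the Deformation Lemma \ref{d.l} with $S=\gamma(B_k)$, defines $\beta(u):=\eta(1,\gamma(u))$, verifies $\beta\in\Gamma_k$ (including the same splitting $(id-\beta)(v)=(id-\gamma)(v)+\gamma(v)-\eta(1,\gamma(v))$ and intersection of the two $\tau$-neighborhoods for the finite-dimensionality clause), and then uses property (ii) of the deformation together with $(\ref{e.6})$ to contradict the definition of $c_k$. Your explicit remarks that $id\in\Gamma_k$ forces $c_k\leq d_k<\infty$ and that the boundary condition uses $a_k<c_k-2\epsilon$ merely spell out details the paper leaves implicit.
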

\begin{proof}
It follows from Lemma $\ref{i.l}$ that $c_k\geq b_k$. Assume by contradiction that the thesis is false. We apply Lemma $\ref{d.l}$ with $S:=\gamma(B_k)$. We may assume that
\begin{equation}\label{e.7}
    c_k-2\epsilon> a_k.
\end{equation}
We define on $B_k$ the map $\beta(u):=\eta(1,\gamma(u))$. We claim that $\beta\in\Gamma_k$.
\begin{enumerate}[(i)]
  \item  It follows from $(\ref{e.7})$ and $(i)$ of the deformation lemma  that $\beta_{|\partial B_k}=id$.
  \item  It is clear that $\beta$ is equivariant and $\tau$-continuous, and $\varphi(\beta(u))\leq \varphi(u)$ $\forall u\in B_k$.
  \item  Let $u\in int(B_k)$. Since $\gamma\in\Gamma_k$, $u$ has a $\tau$-neighborhood $N_u$ in $Y_k$ such that $(id-\gamma)\bigl(N_u\cap int(B_k)\bigr)\subset W_1$, where $W_1$
  is a finite-dimensional subspace of $X$. From $(v)$ of the deformation lemma the point $(1,\gamma(u))$ has a $\tau$-neighborhood $M_{(1,\gamma(u))}=M_1\times M_{\gamma u}$ such that $\big\{z-\eta(s,z)\, \bigl | \, (s,z)\in M_{(1,\gamma(u))}\cap ([0,1]\times \varphi^{c_k+2\epsilon})\big\}$ is contained in a finite-dimensional subspace $W_2$ of $X$. Thus for every $v\in N_u\cap \gamma^{-1}(M_{\gamma(u)})\cap B_k$, we have $(id-\beta)(v)=(id-\gamma)(v)+\gamma(v)-\eta(1,\gamma(v))\in W_1+W_2$ which is finite-dimensional.
\end{enumerate}
\indent Thus $\beta\in \Gamma_k$.\\
\indent Now by using $(\ref{e.6})$ and $(ii)$ of Lemma $\ref{d.l}$ we obtain
\begin{equation*}
  { \displaystyle c_k\leq\sup_{\substack{u \in Y_k \\ \|u\| = \rho_k}} \varphi(\beta(u)) = \displaystyle \sup_{\substack{u \in Y_k \\ \|u\| = \rho_k}} \varphi(\eta(1,\gamma(u))) \leq c_k-\epsilon},
\end{equation*}
which contradicts the definition of $c_k$.
\end{proof}
\begin{theo}\label{theomultsol}
Under assumption $(A_1),$ let $\varphi\in\mathcal{C}^1(X,{\mathbb R})$ be invariant and $\tau$-upper semicontinuous such that
$\nabla\varphi$ is weakly sequentially continuous. If there exists $\rho_k > r_k >0$ such that
\begin{enumerate}
\item[$(A_2)$]{$a_k \, := \, \displaystyle \sup_{\substack{u \in Y_k \\ \|u\| = \rho_k}} \varphi(u) \le 0 $ \,\,\ and \,\,\ $d_k:=\displaystyle \sup_{\substack{u \in Y_k \\ \|u\| \leq \rho_k}} \varphi(u)<\infty $}.
\item[$(A_3)$]{$b_k \, := \, \displaystyle \inf_{\substack{u \in Z_k \\ \|u \| = r_k}} \varphi(u) \to \infty, \, k \to \infty$.}
\end{enumerate}
Then there exists a sequence $(u_k^n)_n\subset X$ such that
\begin{align*}
\varphi'(u_k^n)  \to 0, & & \varphi(u_k^n)  \to c_{k} :=  \inf_{\gamma \in \Gamma_{k}} \sup_{u\in B_{k}} \varphi
\bigl( \gamma(u)  \bigr) \,\,\,\,\,\,\ as \,\ n\to\infty.
\end{align*}
\end{theo}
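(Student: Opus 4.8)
The plan is to obtain this statement as a limiting corollary of Theorem \ref{tm}, since all the analytic machinery has already been built into the deformation lemma and the intersection lemma. First I would fix $k$ and check that the two hypotheses of Theorem \ref{tm}, namely $d_k<\infty$ and $b_k>a_k$, hold for all $k$ large enough. The bound $d_k<\infty$ is given directly by $(A_2)$. For the strict inequality, I would use that $(A_2)$ forces $a_k\le 0$ while $(A_3)$ forces $b_k\to\infty$, so there is some $k_0\ge 2$ with $b_k>0\ge a_k$ for every $k\ge k_0$. Hence Theorem \ref{tm} applies for each fixed $k\ge k_0$, which in particular yields $c_k\ge b_k$.

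Next I would record that the minimax level $c_k$ is finite. The identity map belongs to $\Gamma_k$: it is equivariant and $\tau$-continuous, restricts to $id$ on $\partial B_k$, satisfies $(id-id)\equiv 0$ so the finite-dimensionality condition is trivial, and obeys $\varphi(id(u))=\varphi(u)\le\varphi(u)$. Therefore the definition of $c_k$ as an infimum over $\Gamma_k$ gives $c_k\le \sup_{u\in B_k}\varphi(u)=d_k<\infty$. Combined with $c_k\ge b_k$ this yields $b_k\le c_k\le d_k$, and since $a_k\le 0<b_k\le c_k$ we get $c_k-a_k\ge b_k>0$, so the admissible interval $\left(0,(c_k-a_k)/2\right)$ appearing in Theorem \ref{tm} is nonempty.

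Then I would construct the sequence. Fix $k\ge k_0$ and pick any sequence $\epsilon_n\downarrow 0$ with $0<\epsilon_n<(c_k-a_k)/2$ for all $n$. By the definition of $c_k$ as an infimum over $\Gamma_k$, for each $n$ there exists $\gamma_n\in\Gamma_k$ with $\sup_{B_k}\varphi\circ\gamma_n\le c_k+\epsilon_n$, which is exactly hypothesis $(\ref{e.6})$ of Theorem \ref{tm}. Applying that theorem with this $\epsilon_n$, with a fixed $\delta=1$, and with $\gamma=\gamma_n$ produces a point $u_k^n\in X$ satisfying $c_k-2\epsilon_n\le\varphi(u_k^n)\le c_k+2\epsilon_n$ and $\|\varphi'(u_k^n)\|\le 8\epsilon_n$. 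Letting $n\to\infty$ gives $\varphi(u_k^n)\to c_k$ and $\varphi'(u_k^n)\to 0$, which is the claim for each fixed $k\ge k_0$.

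I do not expect a genuine obstacle in this particular statement: the delicate construction of the equivariant gradient flow (Lemma \ref{d.l}), the degree-theoretic linking (Lemma \ref{i.l}), and the quantitative minimax estimate (Theorem \ref{tm}) have already absorbed all the difficulty. The only points demanding care are the two bookkeeping facts that $id\in\Gamma_k$, which makes $c_k$ finite and the minimax level well posed, and that the linking geometry $b_k>a_k$ holds once $k$ is large; with those settled, the decisive simplification is to keep $\delta$ \emph{fixed} rather than shrinking it, so that the gradient bound $8\epsilon_n/\delta$ tends to zero by itself with no further estimate.
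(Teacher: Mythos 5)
Your proof is correct and takes exactly the paper's route: the paper's entire proof of this theorem reads ``Choose $k$ sufficiently large and apply the preceding theorem,'' i.e.\ it is just an invocation of Theorem \ref{tm}. Your write-up merely supplies the routine details the authors left implicit --- that $(A_2)$, $(A_3)$ give $d_k<\infty$ and $b_k>0\ge a_k$ for large $k$, that $id\in\Gamma_k$ makes $c_k\le d_k$ finite, and the choice $\epsilon_n\downarrow 0$ with a fixed $\delta$ so that the bound $8\epsilon_n/\delta$ tends to zero.
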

\begin{proof}
Choose $k$ sufficiently large and apply the preceding theorem.
\end{proof}
\indent Recall that the functional $\varphi$ satisfies the $(PS)_c$-condition \big(Palais-Smale condition at level $c$\big), if every sequence $(u_n)\subset X$ such that
\begin{equation*}
    \varphi(u_n)\rightarrow c \,\,\,\ \text{and} \,\,\,\ \varphi'(u_n)\rightarrow0\,\,\ \text{as} \,\,\ n\rightarrow\infty,
\end{equation*}
has a convergent subsequence.
\begin{cor}\label{ft.2}
Under the assumptions of the preceding theorem, if $\varphi$ satisfies in addition the $(PS)_c$ condition for every $c>0$, then
$\varphi$ has an unbounded sequence of critical values.
\end{cor}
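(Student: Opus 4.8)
The plan is to read the existence of critical points directly off Theorem \ref{theomultsol} and then supply the missing compactness via the Palais-Smale condition. Fix $k$ large enough that Theorem \ref{theomultsol} applies; it furnishes a sequence $(u_k^n)_n\subset X$ with $\varphi'(u_k^n)\to 0$ and $\varphi(u_k^n)\to c_k$ as $n\to\infty$. In other words, for each such $k$ the sequence $(u_k^n)_n$ is a Palais-Smale sequence for $\varphi$ at the level $c_k$, and the task is merely to turn each of these sequences into a genuine critical point and to control the resulting values.

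To be entitled to use the hypothesis — the $(PS)_c$ condition \emph{for every} $c>0$ — I first need to know that $c_k>0$. This is where the linking geometry re-enters: Theorem \ref{tm} gives the estimate $c_k\geq b_k$, while assumption $(A_3)$ gives $b_k\to\infty$ as $k\to\infty$. Hence, after discarding finitely many indices, $c_k\geq b_k>0$ for all large $k$, so the $(PS)_{c_k}$ condition is indeed available for each of these levels.

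Applying the $(PS)_{c_k}$ condition to the sequence $(u_k^n)_n$ yields a subsequence converging in $X$ to some limit $u_k$. Since $\varphi\in\mathcal{C}^1(X,\mathbb{R})$, both $\varphi$ and $\varphi'$ are norm-continuous, so passing to the limit along this subsequence gives $\varphi(u_k)=c_k$ and $\varphi'(u_k)=0$; that is, $u_k$ is a critical point of $\varphi$ at the level $c_k$. Because $c_k\geq b_k\to\infty$, the collection $(c_k)_k$ of critical values produced in this way is unbounded, which is exactly the claim.

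The argument is essentially a routine packaging of the previous theorem together with the compactness furnished by $(PS)$; the only point demanding care is the positivity and divergence of the $c_k$, which is \emph{not} assumed directly but must be extracted from the inequality $c_k\geq b_k$ of Theorem \ref{tm} combined with $(A_3)$. I would therefore foreground that estimate, since it simultaneously licenses the use of the $(PS)_c$ hypothesis (by guaranteeing $c_k>0$) and delivers the unboundedness of the resulting sequence of critical values.
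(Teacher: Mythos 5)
Your proof is correct and is precisely the argument the paper leaves implicit: extract the Palais--Smale sequence at level $c_k$ from Theorem \ref{theomultsol}, use $c_k\geq b_k$ (Theorem \ref{tm}) together with $(A_3)$ to guarantee both $c_k>0$ (so the $(PS)_{c_k}$ hypothesis applies) and $c_k\to\infty$, and then pass to the limit using the continuity of $\varphi$ and $\varphi'$. Your explicit attention to why the levels $c_k$ are eventually positive is a point the paper glosses over, but the route is the same.
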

\indent In the sequel $|\cdot|_p$ is the usual norm in $L^p.$
%%%%%%%%%%%%%%%%%%%%%%%%%%%%%%%%%%%%%%%%%%%%%%%%%%%%
%%%%%%%%%%%%%%%%%%%%%%%%%%%%%%%%%%%%%%%%%%%%%%%%%%%%
%
%%%%%%%%%%%%%%%%%%%%%%%%%%%%%%%%%%%%%%%%%%%%%%%%%%%%%%%%%%%%%%%%%%%%%%
%%%%%%%%%%%%%%%%%%%%%%%%%%%%%%%%%%%%%%%%%%%%%%%%%%%%%%%%%%%%%%%%%%%%%%
\section{Semilinear Schr\"{o}dinger equation}
%%%%%%%%%%%%%%%%%%%%%%%%%%%%%%%%%%%%%%%%%%%%%%%%%%%%%%%%%%%%%%%%%%%%%
%%%%%%%%%%%%%%%%%%%%%%%%%%%%%%%%%%%%%%%%%%%%%%%%%%%%%%%%%%%%%%%%%%%%%%

In this section we apply our abstract theorem to the resolution of the semilinear Schr\"{o}dinger equation
\begin{equation}\label{semischrodeq}
    \left\{
      \begin{array}{ll}
        -\Delta u+V(x)u=f(x,u), & \hbox{} \\
        u\in H^1(\mathbb{R}^N). & \hbox{}
      \end{array}
    \right.
\end{equation}
We define the functional
\begin{equation}\label{eqfunctschrod}
    \varphi(u):=\frac{1}{2}\int_{\mathbb{R}^N}\bigl(|\nabla u|^2+V(x)u^2\bigr)dx-\int_{\mathbb{R}^N}F(x,u)dx, \,\,\,\,\,\ u\in H^1(\mathbb{R}^N).
\end{equation}
\indent It is well known that if $\varphi$ is of class $\mathcal{C}^1$ then its critical points are weak solutions of ($\ref{semischrodeq}$). Observe also that, due to the periodicity of $f$ and $v$, if $u$ is a solution of $(\ref{semischrodeq})$, then so is $g*u$ for each $g\in\mathbb{Z}^N$, where $(g*u)(x):=u(x+g)$. Two solutions $u$ and $v$ of ($\ref{semischrodeq}$) are said to be geometrically distinct if the sets $\big\{g*u \,\ \bigl|\,\ g\in\mathbb{Z}^N\big\}$ and $\big\{g*v\,\ \bigl|\,\ g\in\mathbb{Z}^N\big\}$ are disjoint. \\
\indent We will prove:
\begin{theo}\label{theoschrodinger}
Assume $(V_0)$, $(f_1)-(f_5)$. Then problem $(\ref{semischrodeq})$ has a sequence $(u_k)$ of solutions such that $\varphi(u_k)\rightarrow\infty$, as $k\rightarrow\infty.$
\end{theo}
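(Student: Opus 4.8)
The plan is to apply Theorem~\ref{theomultsol} to the functional $\varphi$ of $(\ref{eqfunctschrod})$ on $X=H^1(\mathbb{R}^N)$, taking for $G$ the group $\mathbb{Z}_2$ acting by $u\mapsto-u$, which is admissible by the Example following Definition~\ref{defadmissible}, so that $(A_1)$ holds. First I would replace the norm of $H^1(\mathbb{R}^N)$ by the one adapted to $-\Delta+V$: by $(V_0)$ this operator is selfadjoint with $0$ in a spectral gap, so its spectral projections induce a splitting $H^1(\mathbb{R}^N)=Y\oplus Z$ into negative and positive parts, both infinite dimensional, with orthogonal projections $P$ onto $Y$ and $Q$ onto $Z$, and $\varphi(u)=\tfrac12\|Qu\|^2-\tfrac12\|Pu\|^2-\Psi(u)$ with $\Psi(u):=\int_{\mathbb{R}^N}F(x,u)\,dx\ge0$ by $(f_4)$. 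Fixing an orthonormal basis $(e_j)$ of $Z$ then determines the spaces $Y_k,Z_k,B_k,N_k$ of Section~4.

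I would then verify the structural hypotheses of the abstract theorem. The growth conditions $(f_2)$--$(f_3)$ make the superposition operator of $f$ act continuously between the relevant Lebesgue spaces, whence $\varphi\in\mathcal{C}^1(X,\mathbb{R})$, and $(f_5)$ makes $F$ even, so $\varphi$ is invariant under the $\mathbb{Z}_2$ action. The two properties for which the $\tau$-topology is tailored, the $\tau$-upper semicontinuity of $\varphi$ and the weak sequential continuity of $\nabla\varphi$, I would establish as in \cite{W}, Chapter~6: the quadratic part is $\tau$-upper semicontinuous because $-\tfrac12\|Pu\|^2$ is weakly upper semicontinuous in the $Y$-variable while $\tfrac12\|Qu\|^2$ is $\tau$-continuous, and the compactness of $H^1\hookrightarrow L^p_{\mathrm{loc}}$ together with $(f_2)$--$(f_3)$ gives both the $\tau$-continuity of $\Psi$ on bounded sets and the weak sequential continuity of $\nabla\Psi$.

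Next I must produce radii $\rho_k>r_k>0$ realizing $(A_2)$--$(A_3)$. The decisive input for $(A_3)$ is the classical fact $\beta_k:=\sup\{|u|_p:u\in Z_k,\ \|u\|=1\}\to0$ as $k\to\infty$; combining it with the estimate $F(x,u)\le\tfrac14|u|^2+C|u|^p$ that follows from $(f_2)$--$(f_3)$ yields $\varphi(u)\ge\tfrac14\|u\|^2-C\beta_k^{\,p}\|u\|^{p}$ for $u\in Z_k$, and the choice $r_k=(2Cp\,\beta_k^{\,p})^{-1/(p-2)}$, which tends to $+\infty$, forces $b_k\ge(\tfrac14-\tfrac1{2p})r_k^{2}\to\infty$. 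For $(A_2)$ I would use the Ambrosetti--Rabinowitz condition $(f_4)$, which gives $F(x,u)\ge c_1|u|^{\gamma}-c_2$ with $\gamma>2$; since on the finite-dimensional span of $e_0,\dots,e_k$ all norms are equivalent, a by-now standard computation shows $\varphi(u)\to-\infty$ as $\|u\|\to\infty$ on $Y_k$, so $a_k\le0$ once $\rho_k$ is large enough, while $d_k<\infty$ because $F\ge0$ leaves $\tfrac12\|Qu\|^2\le\tfrac12\rho_k^{2}$ as the only positive contribution on $\{u\in Y_k:\|u\|\le\rho_k\}$.

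The main obstacle is the passage from these min-max levels to genuine solutions. Theorem~\ref{theomultsol} yields, for each large $k$, a sequence $(u_k^n)_n$ with $\varphi'(u_k^n)\to0$ and $\varphi(u_k^n)\to c_k\ge b_k\to\infty$; but the $\mathbb{Z}^N$-periodicity of $V$ and $f$ renders $\varphi$ invariant under the noncompact group of integer translations, so the global $(PS)_c$ condition fails and Corollary~\ref{ft.2} is \emph{not} available. I would instead argue in the spirit of \cite{K-S}: by $(f_4)$ every such sequence is bounded (test with $u_k^n$ and with $Qu_k^n-Pu_k^n$ and invoke the Ambrosetti--Rabinowitz inequality); then the weak sequential continuity of $\nabla\varphi$ shows that every weak limit is a critical point, and since $c_k>0$ a nonvanishing argument in the sense of Lions provides translations $g_k^n\in\mathbb{Z}^N$ for which the translated sequence converges weakly to a \emph{nonzero} solution, a Brezis--Lieb type splitting accounting for the lost energy as a finite sum of energies of genuine solutions. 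The heart of the matter, where I expect the real difficulty, is to upgrade this to $\varphi(u_k)\to\infty$: arguing by contradiction, if all nonzero critical values were bounded above by some $M$, then, each being bounded below by a fixed $m>0$, the decomposition of $c_k$ into bubbles of energy in $[m,M]$ would force their number to exceed $c_k/M\to\infty$, producing arbitrarily many geometrically distinct solutions; but under $(f_4)$ the solutions with energy in any fixed bounded interval fall into only finitely many orbits of the $\mathbb{Z}^N$-action, a contradiction. Hence the nonzero critical values are unbounded, which yields the desired sequence $(u_k)$ of solutions with $\varphi(u_k)\to\infty$.
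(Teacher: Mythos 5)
Your setup coincides with the paper's: the spectral decomposition $H^1(\mathbb{R}^N)=Y\oplus Z$, the admissible $\mathbb{Z}_2$-action, the verification of $(A_2)$ via $F(x,u)\geq c_1|u|^{\gamma}-\delta|u|^2$ and norm equivalence on the finite-dimensional part of $Y_k$, the verification of $(A_3)$ via $\beta_k\to0$, and then boundedness of the Palais--Smale sequences, Lions-type nonvanishing, translation by integer vectors, and weak convergence to a nonzero critical point $u_k$ (you are also right that Corollary \ref{ft.2} is unavailable because $(PS)_c$ fails). The genuine gap is in your final step, the upgrade to $\varphi(u_k)\to\infty$. First, a large number of bubbles does not produce many geometrically distinct solutions: the profiles in such a decomposition may all be $\mathbb{Z}^N$-translates of one and the same solution $w$, and translates have equal energy, so $c_k\approx\ell_k\,\varphi(w)$ with $\ell_k\to\infty$ is perfectly consistent with a single orbit. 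Second, the statement you use to close the contradiction --- that solutions with energy in a fixed interval $[m,M]$ fall into only finitely many $\mathbb{Z}^N$-orbits --- is not a consequence of $(f_4)$ or of any hypothesis of the theorem; no such finiteness (nor even compactness of the critical set modulo translations) is available here, and it is precisely to gain control of this kind that Kryszewski and Szulkin impose their additional Lipschitz-type condition on $f$, as the paper's remark after the proof points out. So the contradiction never materializes, and your argument does not deliver the conclusion.

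The paper bypasses the bubbling discussion entirely by proving an exact energy identity: writing $\varphi(u_k^n)=\frac{1}{2}\big<\varphi'(u_k^n),u_k^n\big>+\frac{1}{2}\int_{\mathbb{R}^N}u_k^nf(x,u_k^n)\,dx-\int_{\mathbb{R}^N}F(x,u_k^n)\,dx$, it invokes Lions' lemma (Lemma $1.21$ of \cite{W}) to get $u_k^n\to u_k$ in $L^p(\mathbb{R}^N)$, then the Br\'ezis--Lieb lemma (Lemma \ref{lemmeBL}) applied to $s\mapsto sf(x,s)$ and $s\mapsto F(x,s)$ to pass to the limit in both integrals; since $\varphi'(u_k)=0$, the same identity evaluated at $u_k$ gives $\varphi(u_k)=c_k$ exactly, and $c_k\geq b_k\to\infty$ concludes. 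In other words, the paper shows that \emph{no} energy is lost along the translated Palais--Smale sequence, so that the weak limit itself carries the divergent level $c_k$; any repair of your argument would have to establish an energy identity of this strength, since the qualitative statement ``the weak limit is a nonzero critical point'' plus a count of bubbles cannot by itself force $\varphi(u_k)\to\infty$.
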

\begin{rem}
Infinitely many of the solutions obtained in Theorem $\ref{theoschrodinger}$ above are geometrically distinct. In fact, since $\varphi(u_k)\rightarrow\infty,$ as $k\rightarrow\infty$, there exists $k_0>0$ big enough such that for every $i,j>k_0$, if $i\neq j$ then $\varphi(u_i)\neq\varphi(u_j).$
\end{rem}
Before giving the proof of Theorem $\ref{theoschrodinger}$ we need some preliminary results.\\
Let $L$ be the self-adjoint operator $L:H^1(\mathbb{R}^N)\rightarrow H^1(\mathbb{R}^N)$ defined by
 \begin{equation*}
   (Lu,v)_1:=\int_{\mathbb{R}^N}\bigl(\nabla u\cdot\nabla v+V(x)uv\bigr)dx,
 \end{equation*}
 \big(where ($\cdot)_1$ is the usual inner product in $H^1(\mathbb{R}^N)$\big). By assumption $(V_0)$, $X:=H^1(\mathbb{R}^N)$ is the sum of two infinite-dimensional $L-$invariant orthogonal subspaces $Y$ and $Z$ on which $L$ is respectively negative definite and positive definite (see \cite{K-S}). We denote $P:X\rightarrow Y$ and $Q:X\rightarrow Z$ the orthogonal projections. We introduce a new inner product on $X$ \big(equivalent to $(\cdot)_1$\big) by the formula
\begin{equation*}
    (u,v):=\bigl(L(Qu-Pu),v\bigr)_1, \,\,\,\ u,v\in X
\end{equation*}
with the corresponding norm
\begin{equation*}
    \|u\|:=(u,u)^{\frac{1}{2}}.
\end{equation*}
Since the inner products $(\cdot)$ and $(\cdot)_1$ are equivalent, $Y$ and $Z$ are also orthogonal with respect to $(\cdot)$.\\
One can verify easily that $(\ref{eqfunctschrod})$ reads
\begin{equation}\label{varphi}
    \varphi(u)=\frac{1}{2}\bigl(\|Qu\|^2-\|Pu\|^2\bigr)-\int_{\mathbb{R}^N}F(x,u)dx \,\,\ \forall u,v\in H^1(\mathbb{R}^N) .
\end{equation}
We refer the reader to \cite{K-S} or \cite{W} for the proof of the following lemma.
\begin{lemme}
Under assumptions $(V_0)$, $(f_1)-(f_3)$, $\varphi$ is of class $\mathcal{C}^1$ and is $\tau-$upper semicontinuous, and $\nabla\varphi$ is weakly sequentially continuous. Moreover we have
\begin{equation}\label{deriveevarphi}
    \big<\varphi'(u),v\big>=(Qu,v)-(Pu,v)-\int_{\mathbb{R}^N}vf(x,u)dx.
    \end{equation}
\end{lemme}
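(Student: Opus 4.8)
The plan is to split the functional as $\varphi(u)=q(u)-\Psi(u)$ with $q(u):=\tfrac12\big(\|Qu\|^2-\|Pu\|^2\big)$ and $\Psi(u):=\int_{\mathbb{R}^N}F(x,u)\,dx$, and to reduce each of the four assertions to a property of $\Psi$, since the bounded quadratic form $q$ is transparent. The one elementary reduction I would make at the outset is to combine $(f_2)$ and $(f_3)$ into the two-sided bound: for every $\epsilon>0$ there is $C_\epsilon>0$ with $|f(x,u)|\le\epsilon|u|+C_\epsilon|u|^{p-1}$ for all $x,u$. Together with the continuous Sobolev embeddings $H^1(\mathbb{R}^N)\hookrightarrow L^q(\mathbb{R}^N)$ for $2\le q\le 2N/(N-2)$ and H\"older's inequality, this bound shows that $v\mapsto\int_{\mathbb{R}^N}f(x,u)v\,dx$ is a bounded linear functional on $H^1$, with the estimate controlled simultaneously by the $L^2$ and $L^p$ norms; this is what makes the analytic properties work on the unbounded domain.

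First I would prove that $\varphi\in\mathcal{C}^1$ and establish (\ref{deriveevarphi}). By $(f_1)$ the function $f$ is a Carath\'eodory nonlinearity, so $u\mapsto f(\cdot,u)$ is the associated Nemytskii operator; using the two-sided bound one checks that $\Psi$ is G\^ateaux differentiable with $\langle\Psi'(u),v\rangle=\int_{\mathbb{R}^N}f(x,u)v\,dx$, and that $u\mapsto\Psi'(u)$ is continuous from $H^1$ into its dual by continuity of the Nemytskii operator on $L^2$ and $L^p$. G\^ateaux differentiability together with a continuous derivative gives that $\Psi$, hence $\varphi$, is of class $\mathcal{C}^1$. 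Differentiating $q$ directly, and using that $P,Q$ are orthogonal projections, yields $\langle q'(u),v\rangle=(Qu,v)-(Pu,v)$; adding the two contributions gives exactly (\ref{deriveevarphi}).

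Next I would check weak sequential continuity of $\nabla\varphi$. The gradient of $q$ is the bounded self-adjoint operator $Q-P$, which is weakly continuous, so it suffices to show that $u_n\rightharpoonup u$ in $H^1$ forces $\int_{\mathbb{R}^N}f(x,u_n)v\,dx\to\int_{\mathbb{R}^N}f(x,u)v\,dx$ for each fixed $v\in H^1$. I would fix $v$, split the integral over a ball $B_R$ and its complement, treat $B_R$ with the compact embedding $H^1(B_R)\hookrightarrow L^p(B_R)$ and the continuity of the Nemytskii operator (passing to an a.e. convergent subsequence), and make the tail over $\mathbb{R}^N\setminus B_R$ small uniformly in $n$ via the two-sided bound and the fact that $\|v\|_{L^2(\mathbb{R}^N\setminus B_R)}$ and $\|v\|_{L^p(\mathbb{R}^N\setminus B_R)}$ tend to $0$ as $R\to\infty$. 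A standard subsequence argument removes the passage to subsequences. No sign condition on $F$ is needed here.

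The step I expect to be the main obstacle is the $\tau$-upper semicontinuity. Working with bounded $\tau$-convergent sequences as in \cite{W}, let $u_n\stackrel{\tau}{\rightarrow}u$, so that $Pu_n\rightharpoonup Pu$ and $Qu_n\to Qu$. Then $\tfrac12\|Qu_n\|^2\to\tfrac12\|Qu\|^2$, while weak lower semicontinuity of the norm gives $\limsup\big(-\tfrac12\|Pu_n\|^2\big)\le-\tfrac12\|Pu\|^2$; hence $q$ is $\tau$-upper semicontinuous. It remains to prove that $\Psi$ is weakly sequentially lower semicontinuous, and this is the delicate point: on $\mathbb{R}^N$ weak convergence does not yield strong $L^p$ convergence because mass may escape to infinity. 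I would localize on $B_R$, where $\int_{B_R}F(x,u_n)\to\int_{B_R}F(x,u)$ by the local compactness used above, and control the exterior contribution as in \cite{W} (where it is handled using the lower bound on $F$ provided by the structure of the nonlinearity) before letting $R\to\infty$. Combining the three limits gives $\limsup\varphi(u_n)\le\varphi(u)$, which is the desired $\tau$-upper semicontinuity; assembling the three parts completes the proof.
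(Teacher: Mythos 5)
The paper itself gives no proof of this lemma; it simply refers the reader to \cite{K-S} or \cite{W}, so your attempt has to be measured against the standard argument in those references. Your treatment of the $\mathcal{C}^1$ regularity, of formula $(\ref{deriveevarphi})$, and of the weak sequential continuity of $\nabla\varphi$ (fixing $v$, splitting into $B_R$ and its complement, compactness of $H^1(B_R)\hookrightarrow L^p(B_R)$ on the ball, uniform-in-$n$ smallness of the tails of $v$ in $L^2$ and $L^p$) is correct and is essentially the standard proof.

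The genuine gap is in the $\tau$-upper semicontinuity, and it is exactly the step you dispose of by writing ``working with bounded $\tau$-convergent sequences as in \cite{W}.'' What must be proved is that every superlevel set $\{\varphi\geq c\}$ is $\tau$-closed (this is what the deformation lemma uses: $\varphi^{-1}\bigl(]-\infty,c+2\epsilon[\bigr)$ must be $\tau$-open). Since $\tau$ is generated by the norm $\vvvert\cdot\vvvert$, this amounts to: if $\varphi(u_n)\geq c$ and $\vvvert u_n-u\vvvert\rightarrow0$, then $\varphi(u)\geq c$ --- for \emph{arbitrary} such sequences. A $\tau$-convergent sequence need not be bounded, and for an unbounded sequence $\vvvert P(u_n-u)\vvvert\rightarrow0$ does \emph{not} imply $Pu_n\rightharpoonup Pu$: take $Pu_n=n\,\theta_n$, whose $\vvvert\cdot\vvvert$-norm is $n2^{-(n+1)}\rightarrow0$ while $(Pu_n,w)=1$ for $w=\sum_j j^{-1}\theta_j$. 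So your opening step is invalid as stated, and boundedness is not a harmless normalization --- deriving it is the heart of the Kryszewski--Szulkin/Willem proof. There one argues: from $\varphi(u_n)\geq c$, $Qu_n\rightarrow Qu$ in norm and $F\geq0$ one gets $\frac{1}{2}\|Pu_n\|^2\leq\frac{1}{2}\|Qu_n\|^2-c$, hence $(Pu_n)$ is bounded; \emph{only then} $Pu_n\rightharpoonup Pu$, so $u_n\rightharpoonup u$, and Fatou's lemma (again $F\geq0$, along an a.e.\ convergent subsequence) gives $\int_{\mathbb{R}^N}F(x,u)dx\leq\liminf_n\int_{\mathbb{R}^N}F(x,u_n)dx$, whence $\varphi(u)\geq c$. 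Notice that you invoke the lower bound on $F$ only to control exterior integrals in the weak lower semicontinuity of $\Psi$ --- which, once $F\geq0$ is available, is a one-line application of Fatou and needs no ball-splitting --- whereas the place where positivity is indispensable is the boundedness step, which you skipped. Finally, be aware that this positivity is not a consequence of $(f_1)$--$(f_3)$: it comes from $(f_4)$ (as assumed in both \cite{K-S} and \cite{W}, and available in the application here), so a corrected proof should say explicitly that it uses $F\geq0$ rather than only the hypotheses listed in the lemma.
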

We will also need the following lemma due to Br\'{e}zis and Lieb \big(see Theorem $2$ in \cite{B-L}\big).
\begin{lemme}[Br\'{e}zis-Lieb, $1983$]\label{lemmeBL}
Let $J:\mathbb{C}\rightarrow\mathbb{C}$ be a continuous function such that $J(0)=0$ and for every sufficiently small $\epsilon>0 $ there exist two continuous, nonnegative functions $h_\epsilon$ and $g_\epsilon$ satisfying $$|J(a+b)-J(a)|\leq\epsilon h_\epsilon(a)+g_\epsilon(b), \,\ for \,\ all\,\ a,b\in\mathbb{C}.$$
Let $u_n=u+v_n$ be a sequence of measurable functions from $\mathbb{R}^N$ to $\mathbb{C}$ such that:
\begin{itemize}
  \item [(i)] $v_n\rightarrow0$ a.e.
  \item [(ii)] $J(u)\in L^1(\mathbb{R}^N)$.
  \item [(iii)] $\limsup_{n}\int_{\mathbb{R}^N}h_\epsilon(v_n)dx\leq C<\infty$, for some constant $C$ independent of $\epsilon$.
  \item [(iv)] $\int_{\mathbb{R}^N}g_\epsilon(u)dx<\infty$.
\end{itemize}
Then
$$\int_{\mathbb{R}^N}|J(u+v_n)-J(v_n)-J(u)|dx\rightarrow0 \,\ \textnormal{as}\,\ n\rightarrow\infty.$$
\end{lemme}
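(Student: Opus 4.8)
The plan is to reduce everything to showing $\int_{\mathbb{R}^N} W_n\,dx \to 0$, where $W_n := |J(u+v_n)-J(v_n)-J(u)|$, and to exploit the classical Br\'ezis--Lieb truncation trick. First I would record the pointwise behaviour: since $v_n\to 0$ a.e. and $J$ is continuous with $J(0)=0$, continuity gives $J(u+v_n)\to J(u)$ and $J(v_n)\to J(0)=0$ almost everywhere, so that $W_n\to 0$ a.e. The whole difficulty is therefore to upgrade this a.e. convergence to convergence of the integrals, for which an $L^1$-domination \emph{independent of} $n$ is needed; the subtlety is that $W_n$ itself need not admit such a domination.

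The key step is to split off the part of $W_n$ that is controlled by $h_\epsilon$. Fix a small $\epsilon>0$ and apply the structural hypothesis with $a=v_n$ and $b=u$, giving $|J(u+v_n)-J(v_n)|\le \epsilon h_\epsilon(v_n)+g_\epsilon(u)$, whence
\[
W_n \le \epsilon h_\epsilon(v_n)+g_\epsilon(u)+|J(u)|.
\]
I would then introduce the truncated quantity $G_{n,\epsilon}:=\big(W_n-\epsilon h_\epsilon(v_n)\big)^{+}$. On one hand $0\le G_{n,\epsilon}\le W_n$, so $G_{n,\epsilon}\to 0$ a.e.; on the other hand the displayed inequality yields $G_{n,\epsilon}\le g_\epsilon(u)+|J(u)|=:\Psi_\epsilon$, which by $(ii)$ and $(iv)$ lies in $L^1(\mathbb{R}^N)$ and does not depend on $n$. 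The dominated convergence theorem then applies and gives $\int_{\mathbb{R}^N}G_{n,\epsilon}\,dx\to 0$ as $n\to\infty$, for each fixed admissible $\epsilon$.

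Finally I would recombine: since $W_n\le G_{n,\epsilon}+\epsilon h_\epsilon(v_n)$ pointwise, integrating and taking $\limsup_n$ gives
\[
\limsup_{n}\int_{\mathbb{R}^N}W_n\,dx \le \limsup_n\int_{\mathbb{R}^N}G_{n,\epsilon}\,dx+\epsilon\,\limsup_n\int_{\mathbb{R}^N}h_\epsilon(v_n)\,dx\le \epsilon C,
\]
where the first term vanishes by the previous step and the second is controlled by hypothesis $(iii)$. As $\epsilon>0$ can be taken arbitrarily small, $\limsup_n\int_{\mathbb{R}^N} W_n\,dx=0$, which is the assertion. I expect the genuinely delicate point to be the introduction of the truncation $G_{n,\epsilon}$ together with the observation that it is simultaneously $n$-independently dominated (via the choice $a=v_n$, $b=u$) and squeezed below $W_n$; once this is set up, the passage to the limit and the final $\epsilon\to 0$ argument are routine, the uniformity of the constant $C$ in $(iii)$ being exactly what decouples the $\epsilon$ and $n$ limits.
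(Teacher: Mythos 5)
Your proposal is correct, and it is exactly the canonical argument: the paper itself offers no proof of this lemma, deferring instead to Theorem 2 of the cited Br\'ezis--Lieb paper, and your truncation $G_{n,\epsilon}=\bigl(W_n-\epsilon h_\epsilon(v_n)\bigr)^{+}$, dominated by $g_\epsilon(u)+|J(u)|\in L^1$ via the choice $a=v_n$, $b=u$, followed by dominated convergence and the final $\epsilon\to 0$ step using the $\epsilon$-uniform constant in $(iii)$, is precisely the proof given there. No gaps.
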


\begin{proof} [\textbf{Proof of theorem $\ref{theoschrodinger}$}]
\indent Let $(e_i)_{i\geq0}$ be an orthonormal basis of $(Z,\|\cdot\|)$ and let us define
\begin{equation*}
  \displaystyle  Y_k:=Y\oplus\big(\bigoplus_{i=0}^{k}\mathbb{R} e_i\big) \,\,\,\,\ and \,\,\,\,\ Z_k:=\overline{\bigoplus_{i=k}^{\infty}\mathbb{R} e_i}.
\end{equation*}
\indent Let $u=y+z\in Y_k$, with $y\in Y$ and $z\in\bigoplus\limits_{i=0}^{k}\mathbb{R} e_i$.  $(f_4)$ implies that for each $ \delta>0$ there exists $c_1=c_1(\delta)$ such that
\begin{equation*}
    F(x,u)\geq c_1|u|^\gamma-\delta|u|^2,
\end{equation*}
and then
\begin{equation*}
     -\int_{\mathbb{R}^N}F(x,u)dx\leq\delta|u|_2^2-c_1|u|_\gamma^\gamma.
\end{equation*}
 Since the norms $\|\cdot\|_1$ and $\|\cdot\|$ are equivalent, there exists a constant $c_2>0$ such that $|u|_2^2\leq c_2\|u\|^2=c_2\big(\|y\|^2+\|z\|^2\big)$. We then deduce that
\begin{equation*}
    \varphi(u)\leq \bigl(\delta c_2-\frac{1}{2}\bigr)\|y\|^2+\bigl(\frac{1}{2}+\delta c_2\bigr)\|z\|^2-c_1|u|_\gamma^\gamma.
\end{equation*}
Let $E_k$ be the closure of $Y_k$ in $L^\gamma(\mathbb{R}^N).$ We know that the Sobolev space $H^1(\mathbb{R}^N)$ embeds continuously in $L^\gamma(\mathbb{R}^N),$ then there exists a continuous projection of $E_k$ on $\bigoplus\limits_{i=0}^{k}\mathbb{R} e_i$, and since in a finite-dimensional vector space all norms are equivalent, there is a constant $c_3>0$ such that $c_3\|z\|\leq|u|_\gamma.$ Therefore we have
\begin{equation*}
    \varphi(u)\leq\bigl(\delta c_2-\frac{1}{2}\bigr)\|y\|^2+\bigl(\frac{1}{2}+\delta c_2\bigr)\|z\|^2-c_1c_3^\gamma\|z\|^\gamma.
\end{equation*}
If we choose $\delta$ such that $\delta c_2\leq\frac{1}{4}$ then
\begin{equation*}
    \varphi(u)\leq-\frac{1}{4}\|y\|^2+c_4\|z\|^2-c_5\|z\|^\gamma,
\end{equation*}
for some constants $c_4>0$ and $c_5>0$. This implies that
\begin{equation*}
    \varphi(u)\rightarrow-\infty \,\,\ \textnormal{as} \,\,\ \|u\|\rightarrow\infty.
\end{equation*}
Hence relation $(A_2)$ of Theorem $\ref{theomultsol}$ is satisfied for $\rho_k$ sufficiently large.\\
\indent Now let $u\in Z_k$, then $Pu=0$ and $Qu=u$.
Assumptions $(f_1)$, $(f_2)$ and $(f_3)$ imply that
\begin{equation}\label{bornef}
  \forall\epsilon>0,\exists c_\epsilon>0 \,\ \text{such  that} \,\  |f(x,u)|\leq \epsilon|u|+c_\epsilon|u|^{p-1},
\end{equation}
 hence
\begin{equation*}
    F(x,u)\leq\frac{\epsilon}{2}|u|^2+c'_\epsilon|u|^p,
\end{equation*}
and
\begin{equation*}
    \varphi(u)\geq\frac{1}{2}\|u\|^2-\frac{\epsilon}{2}|u|_2^2-c'_\epsilon|u|_p^p.
\end{equation*}
Let us define
\begin{equation*}
    \beta_k:=\sup_{\substack{v\in Z_k \\ \|v\|=1}}|v|_p
\end{equation*}
so that
\begin{equation*}
    \varphi(u)\geq\frac{1}{2}\|u\|^2-\frac{\epsilon}{2}|u|_2^2-c'_\epsilon\beta_k^p\|u\|^p\geq\frac{1}{2}(1-c_2\epsilon)\|u\|^2-c'_\epsilon\beta_k^p\|u\|^p.
\end{equation*}
Choosing $\epsilon=\frac{1}{2c_2}$ we obtain
\begin{equation*}
    \varphi(u)\geq\frac{1}{2}\bigl(\frac{1}{2}\|u\|^2-c\beta_k^p\|u\|^p\bigr).
\end{equation*}
Hence we have for $\|u\|=r_k:=(cp\beta_k^p)^{\frac{1}{2-p}},$
\begin{equation*}
    \varphi(u)\geq\frac{1}{2}\bigl(\frac{1}{2}-\frac{1}{p}\bigr)(cp\beta_k^p)^{\frac{2}{2-p}}.
\end{equation*}
We know by Lemma $3.8$ in \cite{W} that $\beta_k\rightarrow0$ as $k\rightarrow\infty$, hence relation $(A_3)$ of Theorem $\ref{theomultsol}$ is satisfied.\\
We apply Theorem $\ref{theomultsol}$ with the action of $\mathbb{Z}_2$ and we get the existence of a sequence $(v_k^n)_{n\geq0}$ in $X$ such that
\begin{align*}
\varphi(v_k^n)  \to c_k &\,\,\,\,\,\,\,\,\,\,\,\ \textnormal{and} & \varphi'(v_k^n)  \to 0 \,\,\,\,\,\,\ \textnormal{as} \,\ n\to\infty, \,\,\,\ \textnormal{for every}\,\ k .
\end{align*}
By Lemma $1.5$ of \cite{K-S} the sequence $(v_k^n)_n$ is bounded, and it is evident that for $k$ big enough no subsequence of $(v_k^n)_n$ converges to $0$. By Lemma $1.7$ of \cite{K-S}, there is a sequence $(a_n)\subset\mathbb{R}^N$ and numbers $r,\delta>0$ such that
\begin{equation*}
    \liminf_{\substack{n\rightarrow\infty}}\int_{B(a_n,r)}|v_k^n|^2dx\geq\delta, \,\,\ \text{for k big enough}.
\end{equation*}
Taking a subsequence if necessary we may suppose that, for $k$ big enough,
\begin{equation}\label{vbounded}
    ||v_k^n||_{L^2(B(a_n,r))}\geq \frac{\delta}{2}, \,\,\ \forall n.
\end{equation}
Choose $g_n\in\mathbb{Z}^N$ such that $|g_n-a_n|=\min\big\{|g-a_n|:g\in \mathbb{Z}^N\big\}$. Thus $|g_n-a_n|\leq \frac{1}{2}\sqrt{N}$. Define
\begin{equation}
    u_k^n:=g_n*v_k^n.
\end{equation}
In view of $(\ref{vbounded}),$ we have for $k$ big enough
\begin{equation}\label{ubounded}
    ||u_k^n||_{L^2(B(0,r+\frac{1}{2}\sqrt{N}))}\geq \frac{\delta}{2}, \,\,\ \forall n.
\end{equation}
It is not difficult to see that $\varphi(u_k^n)=\varphi(v_k^n)$ and $||\nabla\varphi(u_k^n)||=||\nabla\varphi(v_k^n)||$. Hence we have
\begin{align*}
\varphi(u_k^n)  \to c_k, & & \varphi'(u_k^n)  \to 0 \,\,\,\,\,\,\ \textnormal{as} \,\ n\to\infty .
\end{align*}
Again by Lemma $1.5$ of \cite{K-S} the sequence $(u_k^n)_n$ is bounded. Thus we deduce that, up to a subsequence,
\begin{align}
  u_k^n &\rightharpoonup u_k \,\ \text{in} \,\ X,\,\ \text{as} \,\ n\rightarrow\infty,& \\
  u_k^n &\rightarrow u_k \,\ \text{in} \,\ L_{loc}^2(\mathbb{R}^N),\,\ \text{as} \,\ n\rightarrow\infty,&\label{convinLloc}\\
  u_k^n &\rightarrow u_k \,\ \text{a.e. on} \,\ \mathbb{R}^N,\,\ \text{as} \,\ n\rightarrow\infty.&
\end{align}
 By $(\ref{ubounded})$ $u_k\neq0$ for $k$ big enough, and in view of the weak continuity of $\nabla\varphi$ we get that $u_k$ is a critical point of $\varphi$, and then a weak solution of $(\ref{semischrodeq})$.\\
\indent
Using $(\ref{varphi})$ and $(\ref{deriveevarphi})$ we have,
\begin{equation}\label{varphiu}
    \varphi(u_k^n)=\frac{1}{2}<\varphi'(u_k^n),u_k^n>+\frac{1}{2}\int_{\mathbb{R}^N}u_k^nf(x,u_k^n)dx-\int_{\mathbb{R}^N}F(x,u_k^n)dx.
\end{equation}
By $(\ref{convinLloc})$ we have for $0<R<\infty$
\begin{equation*}
    \sup_{\substack{y\in\mathbb{R}^N}}\int_{y+B_R}|u_k^n-u_k|^2\rightarrow0,\,\ \textnormal{as} \,\ n\rightarrow\infty,
\end{equation*}
then by Lemma $1.21$ of \cite{W} (see also \cite{PLL})
\begin{equation*}
    u_k^n\rightarrow u_k \,\ \textnormal{in} \,\ L^p(\mathbb{R}^N) \,\ \textnormal{as} \,\ n\rightarrow\infty.
\end{equation*}

 Now by $(\ref{bornef})$ for every $\epsilon>0$, there is $c_\epsilon>0$ such that
\begin{align*}
% \nonumber to remove numbering (before each equation)
  \int_{\mathbb{R}^N}|(u_k^n-u_k)f(x,u_k^n-u_k)|dx &\leq\epsilon|u_k^n-u_k|_2^2+c_\epsilon|u_k^n-u_k|_p^p \,\,\,\ \textnormal{and}& \\
  \int_{\mathbb{R}^N}|F(x,u_k^n-u_k)|dx &\leq\frac{\epsilon}{2}|u_k^n-u_k|_2^2+\frac{c_\epsilon}{p}|u_k^n-u_k|_p^p \,\,\,\ \forall n.&
\end{align*}
  Thus we have
  \begin{equation}\label{convuandF}
  \left\{
    \begin{array}{ll}
     \int_{\mathbb{R}^N}(u_k^n-u_k)f(x,u_k^n-u_k)dx \rightarrow 0 \,\,\ \textnormal{as} \,\ n\rightarrow\infty & \hbox{} \\ \\
     \int_{\mathbb{R}^N}F(x,u_k^n-u_k)dx\rightarrow0 \,\,\ \textnormal{as} \,\ n\rightarrow\infty.  & \hbox{}
    \end{array}
  \right.
  \end{equation}
  One can easily verify that $(\ref{bornef})$ implies that, for almost every $x\in\mathbb{R}^N$, the functions $s\mapsto sf(x,s)$ and $s\mapsto F(x,s)$ satisfy the conditions of Lemma $\ref{lemmeBL}$, with $u_n\equiv u_k^n$, $u\equiv u_k$ and $v_n=u_k^n-u_k$. It then follows from Lemma $\ref{lemmeBL}$ and $(\ref{convuandF})$ that
   \begin{equation}\label{convuandF2}
  \left\{
    \begin{array}{ll}
     \int_{\mathbb{R}^N}u_k^nf(x,u_k^n)dx \rightarrow \int_{\mathbb{R}^N}u_kf(x,u_k)dx \,\,\ \textnormal{as} \,\ n\rightarrow\infty & \hbox{} \\ \\
     \int_{\mathbb{R}^N}F(x,u_k^n)dx\rightarrow \int_{\mathbb{R}^N}F(x,u_k)dx \,\,\ \textnormal{as} \,\ n\rightarrow\infty.  & \hbox{}
    \end{array}
  \right.
  \end{equation}
 Taking the limit $n\rightarrow\infty$ in $(\ref{varphiu})$ and using $(\ref{convuandF2})$ we obtain
 \begin{equation*}
    c_k=\frac{1}{2}\int_{\mathbb{R}^N}u_kf(x,u_k)dx-\int_{\mathbb{R}^N}F(x,u_k)dx.
 \end{equation*}
Now $(\ref{varphi})$ and $(\ref{deriveevarphi})$ also implies, since $\varphi'(u_k)=0$, that
\begin{equation*}
    \varphi(u_k)=\frac{1}{2}\int_{\mathbb{R}^N}u_kf(x,u_k)dx-\int_{\mathbb{R}^N}F(x,u_k)dx=c_k.
\end{equation*}
This ends the proof of the theorem, since $c_k\rightarrow\infty$ as $k\rightarrow\infty.$
\end{proof}
\begin{rem}
\textnormal{The existence of infinitely many geometrically distinct solutions of $(\ref{semischrodeq})$, under $(V_0)$, $(f_1)-(f_5)$, was first proved by Kryszewski and Szulkin in \cite{K-S} by using the degree we present before and a variant of Benci's pseudoindex \cite{Ben}. However they assumed in addition that there are $\lambda>0$ and $R>0$ such that}
\begin{equation*}
    |f(x,u+v)-f(x,u)|\leq\lambda|v|(1+|u|^{p-1}),\,\ \forall x\in\mathbb{R}^N, \,\ \forall u,v\in\mathbb{R}, \,\ \textnormal{with}\,\ |v|\leq R.
\end{equation*}
\end{rem}
%%%%%%%%%%%%%%%%%%%%%%%%%%%%%%%%%%%%%%%%%%%%%%%%%%%%
%%%%%%%%%%%%%%%%%%%%%%%%%%%%%%%%%%%%%%%%%%%%%%%%%%%%
%
%%%%%%%%%%%%%%%%%%%%%%%%%%%%%%%%%%%%%%%%%%%%%%%%%%%%%%%%%%%%%%%%%%%%%%
%%%%%%%%%%%%%%%%%%%%%%%%%%%%%%%%%%%%%%%%%%%%%%%%%%%%%%%%%%%%%%%%%%%%%%
\section{Noncooperative elliptic system}
%%%%%%%%%%%%%%%%%%%%%%%%%%%%%%%%%%%%%%%%%%%%%%%%%%%%%%%%%%%%%%%%%%%%%
%%%%%%%%%%%%%%%%%%%%%%%%%%%%%%%%%%%%%%%%%%%%%%%%%%%%%%%%%%%%%%%%%%%%%%
In this section we apply our abstract result to the resolution of the following potential system
\begin{equation}\label{pt}
    \left\{
                                          \begin{array}{ll}
                                            \,\,\  \Delta u=H_u(x,u,v) \,\ \text{in} \,\ \Omega, & \hbox{} \\
                                            -\Delta v=H_v(x,u,v) \,\ \text{in} \,\ \Omega, & \hbox{} \\
                                          \,\,\  u=v=0 \,\ \text{on} \,\ \partial\Omega, & \hbox{}
                                          \end{array}
                                        \right.
\end{equation}
where $\Omega$ is an open bounded subset of $\mathbb{R}^N$.\\
\indent On $H_0^1(\Omega)$ we choose the norm $\|u\|:=|\nabla u|_2$, which by the Poincar\'{e} inequality is equivalent to the usual norm of $H_0^1(\Omega)$. On the space $X:=H_0^1(\Omega)\times H_0^1(\Omega)$, we choose the product norm $\|(u,v)\|=\sqrt{\|u\|^2+\|v\|^2}$ and we define the functional
\begin{equation}
    \Phi(u,v):=\int_\Omega \Bigl(\frac{1}{2}|\nabla v|^2- \frac{1}{2}|\nabla u|^2-H(x,u,v)\Bigr)dx.
\end{equation}
It is well known that if $\Phi$ is of class $\mathcal{C}^1$, its critical points are weak solutions of $(\ref{pt})$.
Our main result in this section is stated as follows:

\begin{theo}\label{theopotential}
Assume that $(h_1)-(h_4)$ are satisfied.
Then $(\ref{pt})$ has a sequence of solutions $(u_k,v_k)$ in $H_0^1(\Omega)\times H_0^1(\Omega)$ such that $\Phi(u_k,v_k)\rightarrow\infty$ as $k\rightarrow\infty$.
\end{theo}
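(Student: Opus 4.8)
The plan is to verify that the functional $\Phi$ falls within the scope of the abstract machinery of Theorem $\ref{theomultsol}$ and then to upgrade the resulting Palais-Smale-type sequence to a genuine critical point, exactly as was done for the Schr\"odinger equation in Theorem $\ref{theoschrodinger}$. First I would fix the functional-analytic setup: the decomposition $X=Y\oplus Z$ is dictated by the sign of the quadratic part, taking $Y:=H_0^1(\Omega)\times\{0\}$ (where $-\tfrac12|\nabla u|^2$ makes $\Phi$ concave) and $Z:=\{0\}\times H_0^1(\Omega)$ (where $+\tfrac12|\nabla v|^2$ makes it convex). Both are infinite-dimensional, so $\Phi$ is strongly indefinite and the $\tau$-topology built on a countable orthonormal basis $(e_j)$ of $Z$ applies. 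Using $(h_1)$--$(h_2)$ together with the compact Sobolev embeddings $H_0^1(\Omega)\hookrightarrow L^p(\Omega)$ (subcritical on the bounded domain $\Omega$), I would cite the standard verification that $\Phi\in\mathcal{C}^1$, that $\Phi$ is $\tau$-upper semicontinuous, and that $\nabla\Phi$ is weakly sequentially continuous; the symmetry $(h_4)$ gives invariance under the antipodal $\mathbb{Z}_2$-action $(u,v)\mapsto(-u,-v)$, so assumption $(A_1)$ holds.

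Next I would check the geometric hypotheses $(A_2)$ and $(A_3)$, mirroring the Schr\"odinger computation. For the upper bound, take $(u,v)\in Y_k$ and use the superquadraticity encoded in $(h_3)$ (the Ambrosetti-Rabinowitz-type inequality $0<pH\le uH_u+vH_v$, which forces $H(x,u,v)\ge c_1(|u|^p+|v|^p)-c_2$). Since on $Y_k$ the component in $Z$ lives in the finite-dimensional span $\bigoplus_{i=0}^k\mathbb{R}e_i$ where all norms are equivalent, the negative quadratic part $-\tfrac12\|u\|^2$ together with the $-H$ term dominates, yielding $\Phi(u,v)\to-\infty$ as $\|(u,v)\|\to\infty$ on $Y_k$. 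This gives both $a_k\le0$ and $d_k<\infty$ for $\rho_k$ large, i.e. $(A_2)$. For the lower bound on $N_k$, take $(0,v)\in Z_k$ so $\Phi(0,v)=\tfrac12\|v\|^2-\int_\Omega H(x,0,v)$; bounding $H$ above via $(h_2)$--$(h_3)$ by $c(|v|^2+|v|^p)$ and introducing $\beta_k:=\sup_{v\in Z_k,\|v\|=1}|v|_p$, I obtain $\Phi(0,v)\ge\tfrac12\|v\|^2-C\beta_k^p\|v\|^p$, which optimized at $r_k:=(Cp\beta_k^p)^{1/(2-p)}$ tends to $+\infty$ once I establish $\beta_k\to0$; the latter follows from the compactness of the embedding and the fact that $(e_i)_{i\ge k}$ converges weakly to zero. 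This yields $(A_3)$.

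Having verified the hypotheses, Theorem $\ref{theomultsol}$ produces, for each large $k$, a sequence $((u_k^n,v_k^n))_n$ with $\Phi'(u_k^n,v_k^n)\to0$ and $\Phi(u_k^n,v_k^n)\to c_k$, where $c_k\ge b_k\to\infty$. The final and most delicate step is to extract an actual critical point $(u_k,v_k)$ at level $c_k$. Here the situation is more favorable than in the Schr\"odinger case because $\Omega$ is \emph{bounded}, so the Sobolev embeddings $H_0^1(\Omega)\hookrightarrow L^q(\Omega)$ are \emph{compact} for subcritical $q$ and no concentration-compactness or translation trick is needed. I would first show the sequence is bounded: combining $\Phi(u_k^n,v_k^n)-\tfrac1p\langle\Phi'(u_k^n,v_k^n),(u_k^n,v_k^n)\rangle$ with $(h_3)$ produces a coercive lower bound controlling $\|(u_k^n,v_k^n)\|^2$, giving boundedness in $X$. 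Then, up to a subsequence, $(u_k^n,v_k^n)\rightharpoonup(u_k,v_k)$ weakly and strongly in $L^p(\Omega)\times L^p(\Omega)$ by compactness. The main obstacle is converting weak convergence into the strong convergence of both components that is required to pass to the limit in $\Phi'$; the indefinite sign means one cannot simply test against the limit. I would resolve this by testing $\langle\Phi'(u_k^n,v_k^n)-\Phi'(u_k,v_k),(u_k^n-u_k,0)\rangle$ and the analogous $v$-difference separately: the $H_u,H_v$ terms converge to zero by the compact embedding and $(h_2)$, while the quadratic parts give $\|u_k^n-u_k\|^2\to0$ and $\|v_k^n-v_k\|^2\to0$ respectively, so $\Phi$ in fact satisfies the $(PS)_{c_k}$ condition. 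Weak sequential continuity of $\nabla\Phi$ then forces $\Phi'(u_k,v_k)=0$, and from $(u_k^n,v_k^n)\to(u_k,v_k)$ strongly together with $c_k\to\infty$ I conclude $\Phi(u_k,v_k)=c_k\to\infty$, completing the proof.
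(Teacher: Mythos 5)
Your route is essentially the paper's: the same splitting $Y=H_0^1(\Omega)\times\{0\}$, $Z=\{0\}\times H_0^1(\Omega)$, the same verification of $(A_2)$ via the lower bound $H(x,u,v)\ge a_1(|u|^p+|v|^p)-a_2$ coming from $(h_3)$ plus norm equivalence on the finite-dimensional $v$-component, the same verification of $(A_3)$ via $\beta_k\to 0$, and the same use of the compact embedding (Rellich) to get compactness of Palais--Smale-type sequences. The paper merely packages that compactness as Lemma \ref{lemmeconvsubseq} and then invokes Corollary \ref{ft.2}, whereas you invoke Theorem \ref{theomultsol} first and check $(PS)_{c_k}$ afterwards; this difference is immaterial.

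There is, however, one genuine gap: your boundedness step. You claim that combining $\Phi(u_n,v_n)-\tfrac1p\big\langle\Phi'(u_n,v_n),(u_n,v_n)\big\rangle$ with $(h_3)$ ``produces a coercive lower bound controlling $\|(u_n,v_n)\|^2$.'' That is the standard Ambrosetti--Rabinowitz computation for a \emph{definite} quadratic part, and it fails here. Indeed,
\begin{equation*}
\Phi(u_n,v_n)-\tfrac1p\big\langle\Phi'(u_n,v_n),(u_n,v_n)\big\rangle
=\Bigl(\tfrac12-\tfrac1p\Bigr)\bigl(\|v_n\|^2-\|u_n\|^2\bigr)
+\int_\Omega\Bigl(\tfrac1p\bigl(u_nH_u+v_nH_v\bigr)-H\Bigr)dx,
\end{equation*}
and $(h_3)$ only tells you that the integral is nonnegative, while the quadratic term is \emph{indefinite}: along $v_n=0$, $\|u_n\|\to\infty$ the right-hand side goes to $-\infty$, so no coercive lower bound, hence no bound on $\|(u_n,v_n)\|$, can be extracted this way. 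The conclusion (boundedness) is true, but it requires the paper's two-step argument. First use the multiplier $\tfrac12$ instead of $\tfrac1p$: this makes the quadratic part cancel \emph{identically}, so that
\begin{equation*}
d+\|(u_n,v_n)\|\ \ge\ \Phi(u_n,v_n)-\tfrac12\big\langle\Phi'(u_n,v_n),(u_n,v_n)\big\rangle
\ \ge\ \Bigl(\tfrac p2-1\Bigr)\int_\Omega H(x,u_n,v_n)\,dx,
\end{equation*}
which with $H\ge a_1(|u|^p+|v|^p)-a_2$ yields $|u_n|_p^p+|v_n|_p^p\le C_1\|(u_n,v_n)\|+C_2$. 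Second, test $\Phi'(u_n,v_n)$ against $(u_n,0)$ and $(0,v_n)$ \emph{separately}, which by $(h_2)$, H\"older and Young gives $\|u_n\|^2+\|v_n\|^2\le\|u_n\|+\|v_n\|+c\bigl(|u_n|_p^p+|v_n|_p^p\bigr)+c$; combining the two inequalities gives $\|(u_n,v_n)\|^2\le D_1\|(u_n,v_n)\|+D_2$, hence boundedness. Note that this component-wise testing is exactly the trick you do use later for the strong convergence, so the tool is already in your proposal; the point you missed is that it is indispensable already at the boundedness stage, where the argument you actually wrote down would fail. The remainder of your proof is sound.
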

The following lemma is well known (see for example \cite{B-C} or \cite{F}).
\begin{lemme}
Under assumptions $(h_1)$ and $(h_2)$, $\Phi\in\mathcal{C}^1(X,\mathbb{R})$ and
\begin{equation}
    \big<\Phi'(u,v),(k,h)\big>=\int_\Omega\Big(\nabla v\nabla h-\nabla u\nabla k-kH_u(x,u,v)-hH_v(x,u,v)\Big)dx.
\end{equation}
\end{lemme}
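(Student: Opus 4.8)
The plan is to split $\Phi = \Phi_0 - \Psi$, where $\Phi_0(u,v) := \frac{1}{2}\int_\Omega(|\nabla v|^2 - |\nabla u|^2)\,dx$ is the quadratic part and $\Psi(u,v) := \int_\Omega H(x,u,v)\,dx$ is the nonlinear part. The quadratic part is associated with the bounded symmetric bilinear form $\big((u,v),(k,h)\big)\mapsto \int_\Omega(\nabla v\nabla h - \nabla u\nabla k)\,dx$ on $X$, hence is of class $\mathcal{C}^\infty$ with $\langle \Phi_0'(u,v),(k,h)\rangle = \int_\Omega(\nabla v\nabla h - \nabla u\nabla k)\,dx$; this part needs no further argument. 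All the work therefore goes into showing that $\Psi \in \mathcal{C}^1(X,\mathbb{R})$ with $\langle \Psi'(u,v),(k,h)\rangle = \int_\Omega\big(kH_u(x,u,v) + hH_v(x,u,v)\big)\,dx$, after which the formula for $\Phi'$ follows by subtraction.

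First I would check that $\Psi$ is finite-valued. Since $H(x,0,0)=0$ by $(h_1)$, writing $H(x,u,v) = \int_0^1 \frac{d}{dt}H(x,tu,tv)\,dt = \int_0^1 \big(uH_u(x,tu,tv)+vH_v(x,tu,tv)\big)\,dt$ and invoking the growth bound $(h_2)$ gives the pointwise estimate $|H(x,u,v)| \leq c'(|u|+|v|)\big(1+|u|^{p-1}+|v|^{p-1}\big)$. Because $p$ is subcritical, the Sobolev embedding $H_0^1(\Omega)\hookrightarrow L^p(\Omega)$ is continuous, so $u,v \in L^p(\Omega)$ and every term on the right lies in $L^1(\Omega)$ by H\"older's inequality; hence $\Psi(u,v)$ is well defined. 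Next I would compute the G\^ateaux derivative: for fixed $(u,v),(k,h)\in X$ the integrand $t\mapsto H(x,u+tk,v+th)$ is $\mathcal{C}^1$ in $t$ with derivative $kH_u(x,u+tk,v+th)+hH_v(x,u+tk,v+th)$, which by $(h_2)$ and H\"older is dominated, uniformly for $t$ in a bounded interval, by a fixed $L^1$ function; differentiation under the integral sign (justified by dominated convergence) then yields the announced expression for $\langle\Psi'(u,v),(k,h)\rangle$.

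The main obstacle is to upgrade this G\^ateaux derivative to a Fr\'echet derivative by proving that $(u,v)\mapsto\Psi'(u,v)$ is continuous from $X$ into $X^*$. Here I would set $q := p/(p-1)$ and observe that $(h_2)$ provides exactly the growth needed so that the Nemytskii operators $(u,v)\mapsto H_u(\cdot,u,v)$ and $(u,v)\mapsto H_v(\cdot,u,v)$ map $L^p(\Omega)\times L^p(\Omega)$ continuously into $L^q(\Omega)$: since $(p-1)q=p$, the bound $|H_u(x,u,v)|^q \leq C\big(1+|u|^p+|v|^p\big)$ shows each operator is well defined, and continuity follows from the standard theorem on continuity of Nemytskii operators, whose Carath\'eodory hypothesis is guaranteed by $(h_1)$. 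Because $\langle\Psi'(u,v),(k,h)\rangle$ is controlled by $|H_u(\cdot,u,v)|_q\,|k|_p + |H_v(\cdot,u,v)|_q\,|h|_p$ and the embedding $X\hookrightarrow L^p(\Omega)\times L^p(\Omega)$ is continuous, continuity of these two operators forces $\Psi':X\to X^*$ to be continuous. A G\^ateaux differentiable functional with continuous G\^ateaux derivative is Fr\'echet differentiable, so $\Psi\in\mathcal{C}^1(X,\mathbb{R})$, and combining with the smooth quadratic part gives $\Phi\in\mathcal{C}^1(X,\mathbb{R})$ together with the stated derivative formula. The one point demanding genuine care is the verification of the hypotheses of the Nemytskii continuity theorem, namely the Carath\'eodory property plus the $L^p\to L^q$ growth, which is precisely what $(h_1)$ and $(h_2)$ supply.
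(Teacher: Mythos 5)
Your proof is correct. The paper itself offers no proof of this lemma---it is stated as well known, with references to \cite{B-C} and \cite{F}---and your argument (splitting $\Phi$ into a smooth quadratic form plus the potential term $\Psi(u,v)=\int_\Omega H(x,u,v)\,dx$, computing the G\^ateaux derivative by dominated convergence, and upgrading to Fr\'echet via continuity of the Nemytskii operators from $L^p\times L^p$ into $L^{p/(p-1)}$) is exactly the standard proof found in those references. Worth noting: the Nemytskii continuity you invoke is precisely the paper's own Lemma $\ref{lemmeboundG}$ with $r=p/(p-1)$ (so that $p/r=p-1$ matches the growth in $(h_2)$), which the authors state immediately afterward and use for the weak sequential continuity of $\nabla\Phi$; citing it would make your argument fully self-contained within the paper.
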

Define
\begin{align*}
  Y: &=\bigl\{(u,0)\,\ \bigl|\,\ u\in H_0^1(\Omega)\bigr\},&  \\
  Z: &=\bigl\{(0,v)\,\ \bigl|\,\ v\in H_0^1(\Omega)\bigr\},&
\end{align*}
so that $X=Y\oplus Z$ and
\begin{equation}\label{functPhi}
    \Phi(u,v):=\frac{1}{2}||(0,v)||^2-\frac{1}{2}||(u,0)||^2-\int_\Omega H(x,u,v)dx.
\end{equation}
%has a strongly indefinite quadratic part.
The proof of the following lemma follows the lines of the proof of Theorem $A.2$ in \cite{W}.
\begin{lemme}
\label{lemmeboundG}
Assume that $|\Omega|<\infty$, $1\leq p,r<\infty$ and $G\in\mathcal{C}(\overline{\Omega}\times\mathbb{R}\times\mathbb{R})$ such that
\begin{equation*}
    |G(x,u,v)|\leq c\bigl(1+|u|^{\frac{p}{r}}+|v|^{\frac{p}{r}}\bigr).
\end{equation*}
Then $\forall u,v\in L^p(\Omega)$, $G(\cdot,u,v)\in L^r(\Omega)$ and the operator $A:L^p(\Omega)\times L^p(\Omega)\rightarrow L^r(\Omega),(u,v)\mapsto G(x,u,v)$ is continuous.
\end{lemme}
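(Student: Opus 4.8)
The plan is to treat this as a standard statement about Nemytskii (superposition) operators, establishing first that $A$ is well defined and then that it is continuous. Since $L^p(\Omega)\times L^p(\Omega)$ is a metric space, proving continuity amounts to proving sequential continuity.

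First I would verify well-definedness. Fix $u,v\in L^p(\Omega)$. Using the growth hypothesis together with the elementary convexity inequality $(a+b+c)^r\leq 3^{r-1}(a^r+b^r+c^r)$, valid for nonnegative reals and $r\geq1$, one obtains the pointwise bound
\[
|G(x,u(x),v(x))|^r\leq c^r 3^{r-1}\bigl(1+|u(x)|^p+|v(x)|^p\bigr).
\]
Integrating over $\Omega$ and using $|\Omega|<\infty$ gives $\int_\Omega|G(x,u,v)|^r\,dx\leq c^r 3^{r-1}\bigl(|\Omega|+|u|_p^p+|v|_p^p\bigr)<\infty$, so that $G(\cdot,u,v)\in L^r(\Omega)$, which settles the first assertion.

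For continuity I would argue by contradiction. Suppose $(u_n,v_n)\to(u,v)$ in $L^p\times L^p$ but $A(u_n,v_n)\not\to A(u,v)$ in $L^r$. Then along some subsequence $|A(u_n,v_n)-A(u,v)|_r\geq\epsilon_0>0$. From the $L^p$-convergence I would invoke the converse of the dominated convergence theorem to extract a further subsequence (not relabelled) along which $u_n\to u$ and $v_n\to v$ almost everywhere, and for which there exist $\bar g,\bar h\in L^p(\Omega)$ with $|u_n|\leq\bar g$ and $|v_n|\leq\bar h$ a.e. for every $n$. The continuity of $G$ then yields $G(x,u_n,v_n)\to G(x,u,v)$ almost everywhere, while the growth bound together with $|a-b|^r\leq 2^{r-1}(a^r+b^r)$ produces the majorant
\[
|G(x,u_n,v_n)-G(x,u,v)|^r\leq C\bigl(1+\bar g^p+\bar h^p+|u|^p+|v|^p\bigr),
\]
which lies in $L^1(\Omega)$ and is independent of $n$. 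The dominated convergence theorem then forces $|A(u_n,v_n)-A(u,v)|_r\to0$ along this subsequence, contradicting the lower bound $\epsilon_0$. Hence $A$ is continuous.

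The routine ingredients are the two elementary inequalities and the final application of dominated convergence. The main obstacle is the extraction step, namely passing from $L^p$-convergence of a sequence to a subsequence that converges almost everywhere and is dominated by a single function in $L^p(\Omega)$. This is precisely the device used in the proof of Theorem $A.2$ of \cite{W}; I would either cite it directly or reconstruct it from the standard telescoping argument which, given an $L^p$-convergent sequence, produces a rapidly converging subsequence whose successive differences sum to an $L^p$ function, yielding both the almost everywhere convergence and the required domination.
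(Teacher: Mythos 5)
Your proof is correct and is essentially the paper's own approach: the paper gives no argument beyond stating that the proof ``follows the lines of the proof of Theorem $A.2$ in \cite{W}'', and your write-up is exactly that standard Nemytskii-operator argument (growth bound for well-definedness, then subsequence extraction with an $L^p$ dominating function as in Willem's Theorem $A.1$, followed by dominated convergence), adapted to the two-variable setting.
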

\begin{lemme}
Under assumption $(h_1)$, $\Phi$ is $\tau-$upper semicontinuous and $\nabla \Phi$ is weakly sequentially continuous.
\end{lemme}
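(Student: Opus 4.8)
The plan is to exploit the characterization of $\tau$-convergence on bounded sequences together with the indefinite structure
\begin{equation*}
\Phi(w)=\frac{1}{2}\|Qw\|^2-\frac{1}{2}\|Pw\|^2-\Psi(w),\qquad \Psi(w):=\int_\Omega H(x,u,v)\,dx ,
\end{equation*}
where $w=(u,v)$, $P(u,v)=(u,0)$ and $Q(u,v)=(0,v)$. Since $\tau$ is generated by the norm $\vvvert\cdot\vvvert$ it is metrizable, so $\tau$-upper semicontinuity is equivalent to sequential $\tau$-upper semicontinuity and I work throughout with sequences. I recall that for a \emph{bounded} sequence $w_n\stackrel{\tau}{\rightarrow}w$ one has $Pw_n\rightharpoonup Pw$ and $Qw_n\rightarrow Qw$, i.e.\ $u_n\rightharpoonup u$ in $H_0^1(\Omega)$ and $v_n\rightarrow v$ in $H_0^1(\Omega)$.

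For the upper semicontinuity I would pass to a subsequence along which $\Phi(w_n)$ tends to $\limsup_n\Phi(w_n)$. If this subsequence is unbounded, then since $\vvvert\cdot\vvvert$ dominates $\|Q\cdot\|$ its $Q$-part stays bounded, so $\|u_n\|\to\infty$ along a further subsequence and, because $H\ge0$ by $(h_3)$, $\Phi(w_n)\le\frac{1}{2}\|v_n\|^2-\frac{1}{2}\|u_n\|^2\to-\infty$; otherwise the subsequence is bounded and I treat the three terms separately. There $\frac{1}{2}\|Qw_n\|^2=\frac{1}{2}\|v_n\|^2\to\frac{1}{2}\|v\|^2$ by strong convergence; $-\frac{1}{2}\|Pw_n\|^2=-\frac{1}{2}\|u_n\|^2$ satisfies $\limsup_n(-\frac{1}{2}\|u_n\|^2)\le-\frac{1}{2}\|u\|^2$ by weak lower semicontinuity of the norm; and, using the compact embedding $H_0^1(\Omega)\hookrightarrow L^p(\Omega)$, both $u_n\to u$ and $v_n\to v$ in $L^p(\Omega)$, so Lemma \ref{lemmeboundG} applied to $H$ (whose growth $|H(x,u,v)|\le c(1+|u|^p+|v|^p)$ follows by integrating the bound $(h_2)$ and using $H(x,0,0)=0$) with $r=1$ gives $\Psi(w_n)\to\Psi(w)$. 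Adding these yields $\limsup_n\Phi(w_n)\le\Phi(w)$ in every case.

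For the weak sequential continuity of $\nabla\Phi$ I would test the derivative against a fixed $(k,h)\in X$ and show $\langle\Phi'(w_n),(k,h)\rangle\to\langle\Phi'(w),(k,h)\rangle$ whenever $w_n\rightharpoonup w$ in $X$; since this holds for every $(k,h)$ it gives $\nabla\Phi(w_n)\rightharpoonup\nabla\Phi(w)$. In the formula for $\langle\Phi'(u,v),(k,h)\rangle$ the quadratic terms $\int_\Omega\nabla v_n\nabla h$ and $\int_\Omega\nabla u_n\nabla k$ converge by weak convergence of $(u_n,v_n)$ against the fixed test pair. For the nonlinear terms $\int_\Omega kH_u(x,u_n,v_n)$ and $\int_\Omega hH_v(x,u_n,v_n)$ I again use $u_n\to u$, $v_n\to v$ in $L^p(\Omega)$ by compactness, and that $(h_2)$ gives $|H_u|,|H_v|\le c(1+|u|^{p-1}+|v|^{p-1})$; Lemma \ref{lemmeboundG} applied to $H_u$ and $H_v$ with $r=p'=p/(p-1)$ then yields $H_u(\cdot,u_n,v_n)\to H_u(\cdot,u,v)$ and $H_v(\cdot,u_n,v_n)\to H_v(\cdot,u,v)$ in $L^{p'}(\Omega)$, so the pairings with $k,h\in L^p(\Omega)$ pass to the limit by H\"older's inequality.

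The routine parts are the convergence of the linear terms and the elementary inequality $H\ge0$. The main obstacle, and the place where the design of the $\tau$-topology is essential, is the upper-semicontinuity estimate: the negative-definite part $-\frac{1}{2}\|u_n\|^2$ lives on $Y$, where $\tau$-convergence is only weak, so it can be controlled only from above through weak lower semicontinuity of the norm, whereas the positive-definite part $\frac{1}{2}\|v_n\|^2$ lives on $Z$, where $\tau$-convergence is strong and the term is genuinely continuous. It is precisely this asymmetry, together with the compactness needed to pass to the limit in $\Psi$ and the care required in the reduction to bounded sequences, that makes both claims work.
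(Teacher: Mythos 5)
Your proof is correct, and for the weak sequential continuity of $\nabla\Phi$ it coincides with the paper's argument: Rellich gives $u_n\to u$, $v_n\to v$ in $L^p(\Omega)$, Lemma \ref{lemmeboundG} with $r=p/(p-1)$ gives $H_u(\cdot,u_n,v_n)\to H_u(\cdot,u,v)$ and $H_v(\cdot,u_n,v_n)\to H_v(\cdot,u,v)$ in $L^{p/(p-1)}(\Omega)$, and H\"older concludes. For the $\tau$-upper semicontinuity your skeleton also matches the paper's (strong convergence of the $Z$-component, boundedness and hence weak convergence of the $Y$-component, weak lower semicontinuity of the norm), but you treat the nonlinear term differently: you establish genuine convergence $\int_\Omega H(x,u_n,v_n)\,dx\to\int_\Omega H(x,u,v)\,dx$ via the compact embedding $H_0^1(\Omega)\hookrightarrow L^p(\Omega)$ and Lemma \ref{lemmeboundG} with $r=1$, whereas the paper needs only the one-sided estimate $\liminf_n\int_\Omega H(x,u_n,v_n)\,dx\ge\int_\Omega H(x,u,v)\,dx$, which it gets more cheaply from a.e.\ convergence (compact embedding into $L^2(\Omega)$) together with Fatou's lemma. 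The trade-off: the Fatou route uses only the continuity and nonnegativity of $H$, while your route additionally spends the growth condition $(h_2)$ (needed to legitimize Lemma \ref{lemmeboundG} for $H$ itself), in exchange for the stronger conclusion that the nonlinear term is actually $\tau$-continuous along bounded sequences. Note that both arguments --- yours to dispose of the unbounded case, the paper's to deduce boundedness of $(u_n)$ directly from $c\le\Phi(u_n,v_n)$ --- invoke $H\ge0$, which follows from $(h_3)$ together with $H(x,0,0)=0$ and not from $(h_1)$ alone, so in either version the lemma's stated hypothesis is an understatement; your bounded/unbounded dichotomy is otherwise a harmless reorganization of the paper's direct boundedness step.
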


\begin{proof}
\begin{enumerate}[(i)]
  \item{Let $(u_n,v_n)\in X$ such that $(u_n,v_n)\stackrel{\tau}{\rightarrow}(u,v)$ and $c\leq \Phi(u_n,v_n)$. By the definition of $\tau$ we have $v_n\rightarrow v$ in $H_0^1(\Omega)$ and then $(v_n)$ is bounded.
Noting that $c\leq \Phi(u_n,v_n)$ and $H(x,u_n,v_n)\geq0$, then $(u_n)$ is bounded and $u_n\rightharpoonup u$ in $H_0^1(\Omega)$. Now since the embedding $H_0^1(\Omega)\hookrightarrow L^2(\Omega)$ is compact, $v_n\rightarrow v$ and $u_n\rightarrow u$ in $L^2(\Omega)$. Thus up to a subsequence $v_n(x)\rightarrow v(x)$ and $u_n(x)\rightarrow u(x)$ a.e on $\Omega$, and by $(h_1)$ $H(x,u_n(x),v_n(x))\rightarrow H(x,u(x),v(x))$ a.e on $\Omega$. It then follows from Fatou's Lemma and the weak lower semicontinuity of the norm $\|\cdot\|$ that $c\leq\Phi(u,v)$, and $\Phi$ is $\tau-$upper semicontinuous.}
  \item{Let $(u_n,v_n)\in X$ such that $(u_n,v_n)\rightharpoonup (u,v)$, then by Rellich theorem $u_n\rightarrow u$ and $v_n\rightarrow v$ in $L^p(\Omega)$. By the H\"{o}lder inequality \\
    $$ \Big|\int_\Omega \bigl(hH_u(x,u_n,v_n)+kH_v(x,u_n,v_n)-hH_u(x,u,v)-kH_v(x,u,v)\bigr)dx\Big|\leq $$
$$ \int_\Omega |h||H_u(x,u_n,v_n)-H_u(x,u,v)|dx+\int_\Omega |k||H_v(x,u_n,v_n)-H_v(x,u,v)|dx\leq $$
 $$|h|_p|H_u(x,u_n,v_n)-H_u(x,u,v)|_{\frac{p}{p-1}}+ |k|_p|H_v(x,u_n,v_n)-H_v(x,u,v)|_{\frac{p}{p-1}}.$$
It follows from Lemma $\ref{lemmeboundG}$ that
\begin{equation*}
    \int_\Omega \bigl(hH_u(x,u_n,v_n)+kH_v(x,u_n,v_n)-hH_u(x,u,v)-kH_v(x,u,v)\bigr)dx\rightarrow 0\,\ \textnormal{as} \,\ n\rightarrow\infty,
\end{equation*}
and then
\begin{equation*}
    \bigl(\nabla\Phi(u_n,v_n),(h,k)\bigr)\rightarrow\bigl(\nabla\Phi(u,v),(h,k)\bigr) \,\ \forall (h,k)\in X.
\end{equation*}
This shows that $\nabla\Phi$ is weakly sequentially continuous.}
\end{enumerate}
\end{proof}

\begin{lemme}\label{lemmeconvsubseq}
Under assumptions $(h_1)-(h_3)$, every sequence $(u_n,v_n)\subset X$ such that
\begin{equation*}
    d:=\sup_{\substack{n}}\Phi(u_n,v_n)<\infty \,\,\ \textnormal{and} \,\,\ \Phi'(u_n,v_n)\rightarrow0
\end{equation*}
has a convergent subsequence.
\end{lemme}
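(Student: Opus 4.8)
The plan is to read this as a compactness (Palais--Smale--type) statement and to split the argument into two stages: first show that any such sequence $(u_n,v_n)$ is bounded in $X$, then upgrade boundedness to strong convergence of a subsequence. Throughout I write $w_n=(u_n,v_n)$ and $\epsilon_n:=\|\Phi'(w_n)\|_{X^*}\to0$, and I identify $\Phi'$ with the gradient $\nabla\Phi$ via the Riesz isomorphism. From the formula for $\langle\Phi'(u,v),(k,h)\rangle$ one reads off $\nabla\Phi(u,v)=\bigl(-u-K_u(u,v),\,v-K_v(u,v)\bigr)$, where $K_u(u,v),K_v(u,v)\in H^1_0(\Omega)$ are the elements defined by $(K_u(u,v),k)=\int_\Omega kH_u(x,u,v)\,dx$ and $(K_v(u,v),h)=\int_\Omega hH_v(x,u,v)\,dx$.

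I expect the boundedness to be the main obstacle, precisely because $\Phi$ is strongly indefinite: the term $-\tfrac12\|u\|^2$ has the wrong sign, so naive coercivity fails and $\|u_n\|$ cannot be controlled directly. The resolution exploits the Ambrosetti--Rabinowitz--type hypothesis $(h_3)$ twice. First, I combine $\Phi(w_n)\le d$ with the identity $\langle\Phi'(w_n),w_n\rangle=\|v_n\|^2-\|u_n\|^2-\int_\Omega(u_nH_u+v_nH_v)\,dx=o(\|w_n\|)$ and the inequality $\int_\Omega(u_nH_u+v_nH_v)\,dx\ge p\int_\Omega H\,dx$ coming from $(h_3)$; eliminating the indefinite quantity $\|v_n\|^2-\|u_n\|^2$ between these relations yields $\int_\Omega H(x,u_n,v_n)\,dx\le C(d+\epsilon_n\|w_n\|)$. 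Second, integrating the differential inequality in $(h_3)$ radially and using that $H$ is continuous and strictly positive on the compact set $\overline{\Omega}\times S^1$ (by $(h_1)$ and $(h_3)$) gives the superquadratic lower bound $H(x,\xi)\ge a_0|\xi|^p$ for $|\xi|\ge1$, with $a_0>0$; together with the previous estimate this forces $|u_n|_p^p+|v_n|_p^p\le C(1+\epsilon_n\|w_n\|)$.

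With the $L^p$ norms so controlled, I close the argument on $\|w_n\|$ through the gradient equation. Since $\nabla\Phi(w_n)\to0$, I have $u_n=-K_u(w_n)+o(1)$ and $v_n=K_v(w_n)+o(1)$ in $H^1_0(\Omega)$; testing the defining identities of $K_u,K_v$ with $K_u(w_n),K_v(w_n)$ themselves and using the Sobolev inequality together with $(h_2)$ gives $\|K_u(w_n)\|\le C|H_u(x,u_n,v_n)|_{p'}\le C(1+|u_n|_p^{p-1}+|v_n|_p^{p-1})$ and likewise for $K_v$, where $p':=p/(p-1)$. Hence
\begin{equation*}
\|w_n\|\le C\bigl(1+|u_n|_p^{p-1}+|v_n|_p^{p-1}\bigr)+o(1)\le C\bigl(1+(1+\epsilon_n\|w_n\|)^{(p-1)/p}\bigr)+o(1).
\end{equation*}
Because the exponent $(p-1)/p<1$, a routine case distinction (according as $\epsilon_n\|w_n\|\le1$ or not; in the second case divide by $\|w_n\|$ and let $n\to\infty$) forces $(\|w_n\|)$ to be bounded. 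This bootstrap is the delicate point of the proof.

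For the second stage, boundedness yields a subsequence with $w_n\rightharpoonup w=(u,v)$ in $X$, and by the Rellich--Kondrachov theorem (applicable since $p$ is subcritical by $(h_2)$) one has $u_n\to u$ and $v_n\to v$ in $L^p(\Omega)$. Applying Lemma~\ref{lemmeboundG} with $r=p'$ and $p/r=p-1$, so that $(h_2)$ is exactly the required growth bound, shows $H_u(x,u_n,v_n)\to H_u(x,u,v)$ and $H_v(x,u_n,v_n)\to H_v(x,u,v)$ in $L^{p'}(\Omega)$; consequently $K_u(w_n)$ and $K_v(w_n)$ converge strongly in $H^1_0(\Omega)$. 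Feeding this back into $u_n=-K_u(w_n)+o(1)$ and $v_n=K_v(w_n)+o(1)$ shows that $u_n$ and $v_n$ converge strongly in $H^1_0(\Omega)$, so $(w_n)$ admits a convergent subsequence, as required.
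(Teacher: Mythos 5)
Your proof is correct and follows essentially the same two-stage strategy as the paper: condition $(h_3)$ applied to $\Phi(w_n)-\tfrac{1}{2}\langle\Phi'(w_n),w_n\rangle$ yields the $L^p$ bound, the near-critical-point equation converts that bound into boundedness in $X$, and strong convergence of a subsequence then follows from Rellich's theorem together with Lemma \ref{lemmeboundG}. The only differences are in packaging: you close the boundedness bootstrap through the dual estimate $\|K_u(w_n)\|\le C|H_u(\cdot,u_n,v_n)|_{p/(p-1)}$ with its sublinear exponent $(p-1)/p$, whereas the paper tests $\Phi'(w_n)$ directly against $(u_n,0)$ and $(0,v_n)$ to obtain $\|w_n\|^2\le \|w_n\|+c\bigl(|u_n|_p^p+|v_n|_p^p\bigr)+c$, and you re-derive by radial integration the superquadratic lower bound on $H$ that the paper quotes from \cite{Szul}.
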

\begin{proof}
We know by \cite{Szul} that $(h_3)$ implies:
\begin{equation}\label{borneH}
    \exists a_1,a_2>0 \,\ \text{such that}\,\ H(x,u,v)\geq a_1\big(|u|^p+|v|^p\big)-a_2.
\end{equation}
For $n$ big enough we have
\begin{eqnarray*}
  d+||(u_n,v_n)|| &\geq& \Phi(u_n,v_n)-\frac{1}{2}\big<\Phi'(u_n,v_n),(u_n,v_n)\big> \\
   &=& \int_\Omega\frac{1}{2}\bigl(u_nH_u(x,u_n,v_n+v_nH_v(x,u_n,v_n)\bigr)dx-\int_\Omega H(x,u_n,v_n)dx \\
   &\geq& \bigl(\frac{p}{2}-1\bigr)\int_\Omega H(x,u_n,v_n)dx  \\
   &\geq& \bigl(\frac{p}{2}-1\bigr)\big[a_1(|u_n|_p^p+|v_n|_p^p)-c_2|\Omega|\big].
\end{eqnarray*}
This implies that
\begin{equation}\label{z1}
     |u_n|_p^p+|v_n|_p^p\leq C_1\|(u_n,v_n)\|+C_2,
\end{equation}
for some positive constants $C_1$ and $C_2$.\\
On the other hand, we have for $n$ big enough
\begin{equation*}
    \|v_n\|^2-\int_\Omega v_nH_v(x,u_n,v_n)dx=\big<\Phi'(u_n,v_n),(0,v_n)\big>\leq\|v_n\|,
\end{equation*}
and
\begin{equation*}
    \|u_n\|^2+\int_\Omega u_nH_u(x,u_n,v_n)dx=\big<-\Phi'(u_n,v_n),(u_n,0)\big>\leq\|u_n\|.
\end{equation*}
We then obtain, by using $(h_2)$
\begin{equation}\label{z2}
     \|u_n\|^2+\|v_n\|^2\leq \|u_n\|+\|v_n\|+c\bigl(|u_n|_p^p+|v_n|_p^p\bigr)+c.
\end{equation}

Using  $(\ref{z1})$ and $(\ref{z2})$, we deduce that
\begin{equation*}
    \|(u_n,v_n)\|^2\leq D_1\|(u_n,v_n)\|+D_2,
\end{equation*}
for some positive constants $D_1$ and $D_2$.
Thus $(u_n,v_n)$ is bounded. Up to a subsequence there exists $(u,v)\in X$ such that $(u_n,v_n)\rightharpoonup (u,v)$. By Rellich theorem $(u_n,v_n)\rightarrow (u,v)$ in $L^p(\Omega)\times L^p(\Omega)$, and by Lemma $\ref{lemmeboundG}$, $H_u(x,u_n,v_n)\rightarrow H_u(x,u,v)$ and $H_v(x,u_n,v_n)\rightarrow H_v(x,u,v)$ as $n\rightarrow\infty.$  \\
Now one can verify easily that
$$\|u_n-u\|^2 = -\big<\Phi'(u_n,v_n)-\Phi'(u,v),(u_n-u,0)\big>-\int_\Omega (u_n-u)(H_u(x,u_n,v_n)-H_u(x,u,v)) ,$$
$$\|v_n-v\|^2 =  \big<\Phi'(u_n,v_n)-\Phi'(u,v),(0,v_n-v)\big>+\int_\Omega (v_n-v)(H_u(x,u_n,v_n)-H_u(x,u,v)).$$
It is clear that $\big<\Phi'(u_n,v_n)-\Phi'(u,v),(u_n-u,0)\big>\rightarrow0$ as $n\rightarrow\infty$. By the H\"{o}lder inequality and Lemma $\ref{lemmeboundG}$,
$$\bigl|\int_\Omega (u_n-u)(H_u(x,u_n,v_n)-H_u(x,u,v))\bigr|\leq|u_n-u|_p|H_u(x,u_n,v_n)-H_u(x,u,v)|_{\frac{p}{p-1}}\rightarrow0.$$
Thus we have proved that $\|u_n-u\|\rightarrow0$. By the same way $\|v_n-v\|\rightarrow0$.
\end{proof}

\begin{proof}[\textbf{Proof of Theorem $\ref{theopotential}$}]
Let $(e_j)$ be an orthonormal basis of $H_0^1(\Omega)$ and define
\begin{equation*}
 \displaystyle   Y_k:=Y\oplus\bigl(\big\{0\big\}\times\bigoplus_{j=0}^k\mathbb{R}e_j\bigr) \,\,\,\,\,\,\,\ Z_k:=\big\{0\big\}\times\overline{\bigoplus_{j=k}^\infty\mathbb{R}e_j}.
\end{equation*}
\begin{enumerate}[(i)]
\item{Let $(u,v)\in Y_k$. By definition of $Y_k$, $v\in \bigoplus\limits_{j=0}^k\mathbb{R}e_j$, and by $(\ref{functPhi})$
\begin{equation*}
    \Phi(u,v)=\frac{1}{2}\|v\|^2-\frac{1}{2}\|u\|^2-\int_\Omega H\big(x,u,v\big)dx.
\end{equation*}
$(\ref{borneH})$ then implies
\begin{align*}
% \nonumber to remove numbering (before each equation)
  \Phi(u,v) &\leq \frac{1}{2}\|v\|^2-\frac{1}{2}\|u\|^2-a_1|u|_p^p-a_1|v|_p^p+a_2|\Omega| & \\
   &\leq \frac{1}{2}\|v\|^2-\frac{1}{2}\|u\|^2-a_1|v|_p^p+a_2|\Omega|.&
\end{align*}
Since on the space $\oplus_{j=0}^k\mathbb{R} e_j$ all norms are equivalent, there exists a constant $c>0$ such that $c\|v\|^p\leq|v|_p^p$ and hence
\begin{equation*}
    \Phi(u,v)\leq -\frac{1}{2}\|u\|^2+\frac{1}{2}\|v\|^2-a'_1\|v\|^p+a_1|\Omega|.
\end{equation*}
This shows that $\Phi(u,v)\rightarrow-\infty$ as $\|(u,v)\|\rightarrow\infty$, so condition $(A_2)$ of Theorem $\ref{theomultsol}$ is satisfied for $\rho_k$ large enough.}
\item{Let $(0,v)\in Z_k$, then by $(\ref{functPhi})$
\begin{equation*}
    \Phi(0,v)=\frac{1}{2}||v||^2-\int_\Omega H(x,0,v)dx.
\end{equation*}
We deduce from $(h_2)$ the existence of $c>0$ such that
\begin{equation*}
    |H(x,u,v)|\leq c\big(1+|u|^p+|v|^p\big),
\end{equation*}
which implies that
\begin{equation*}
    \Phi(0,v)\geq\frac{1}{2}||v||^2-c|v|_p^p-c|\Omega|.
\end{equation*}
Define
\begin{equation*}
 \displaystyle   \beta_k:=\sup_{\substack{u\in \overline{\bigoplus_{j=k}^\infty\mathbb{R} e_j}\\ \|u\|=1}}|u|_p.
\end{equation*}
Then we have for
\begin{equation*}
   \|v\|= r_k:=(cp\beta_k^p)^{\frac{1}{2-p}},
\end{equation*}
\begin{equation*}
    \Phi(0,v)\geq (\frac{1}{2}-\frac{1}{p})(cp\beta_k^p)^{\frac{2}{2-p}}-c|\Omega|.
\end{equation*}
We know by  Lemma $3.8$ in \cite{W} that $\beta_k\rightarrow0$ as $k\rightarrow\infty$, so $\Phi(0,v)\rightarrow\infty$ as $k\rightarrow\infty$ and condition $(A_3)$ of Theorem $\ref{theomultsol}$ is satisfied. By Lemma $\ref{lemmeconvsubseq}$, $\Phi$ satisfies the Palais-Smale condition and by $(h_5)$ $\Phi$ is even.}
\end{enumerate}
 We then conclude by applying Corollary $\ref{ft.2}$ with the action of $\mathbb{Z}_2.$
\end{proof}

%%%%%%%%%%%%%%%%%%%%%%%%%%%%%%%%%%%%%%%%%%%%%%%%%%%%
\section*{Conclusion}
%%%%%%%%%%%%%%%%%%%%%%%%%%%%%%%%%%%%%%%%%%%%%%%%%%%%%
In this paper, we presented a generalization of the Fountain Theorem to strongly indefinite functionals. The use of the $\tau-$topology introduced by Kryszewski and Szulkin permitted an extension of the Borsuk-Ulam Theorem to $\sigma-$admissible functions making the above-mentioned generalization quite natural. We believe that the ideas presented in this paper could be used to similarly generalize a result like the dual version of the Fountain Theorem (see \cite{W}, Theorem $3.18$ for instance). An adaptation of these ideas to separable reflexive Banach spaces will be also the subject of future research.
%%%%%%%%%%%%%%%%%%%%%%%%%%%%%%%%%%%%%%%%%%%%%%%%%%%%
%%%%%%%%%%%%%%%%%%%%%%%%%%%%%%%%%%%%%%%%%%%%%%%%%%%%
%

%
%

\begin{thebibliography}{30}
%
%
\bibitem{B} T. Bartsch , Infinitely many solutions of a symmetric Dirichlet problem, {\it Nonlinear Anal.} {\bf 20} (1993), 1205-1216.
\bibitem{K-S} W. Kryszewski and A. Szulkin, Generalized linking theorem with an application to a semilinear Schr\"{o}dinger equation, { \it Adv. in Differential Equations}, vol. {\bf 3}, no. 3 (1998), 441-472.
\bibitem{B-C} T. Bartsch, M. Clapp, Critical Point Theory for Indefinite Functionals with Symmetries,  {\it J. Funct. Anal.} {\bf 138} (1996), 107-136.
\bibitem{F}  F. Colin, Existence Result for a Class of Nonlinear Elliptic Systems on Punctured Unbounded Domains, {\it Bound. Value Probl.} 2010, Art. ID 175409, 15 pp.
\bibitem{Hira} N. Hirano, Infnitely many solutions for non-cooperative elliptic systems, {\it J. Math. Anal. Appl.} {\bf 311} (2005), 545–566.
\bibitem{F-D} D. G. de Figueiredo, Y. Ding,  Strongly indefinite functionals and multiple solutions of elliptic systems, {\it Trans.
Amer. Math. Soc.}, {\bf 355}(7) (2003), 2973-2989.
\bibitem{B-W} T. Bartsch, M. Willem, On an elliptic equation with concave and convex nonlinearities, {\it Proc. Amer. Math. Soc.} {\bf 123}, no. 11 (1995), 3555-3561.
\bibitem{Y-H}  M.-H. Yang, Z.-Q. Han, Existence and multiplicity results for the nonlinear Schr\"{o}dinger-Poisson
systems, {\it Nonlinear Anal. Real World Appl.} {\bf 13} (2012), 1093-1101.
 \bibitem{Ton} E. Tonkes, A semilinear elliptic equation with convex and concave nonlinearities. {\it Topol. Methods Nonlinear Anal.} {\bf 13}, no. 2 (1999), 251-271.
\bibitem{Sev} U. B. Severo,  Multiplicity of solutions for a class of quasilinear elliptic equations with concave and convex terms in R, {\it Electron. J. Qual. Theory Differ. Equ.}, no. 5 (2008), 16 pp.
\bibitem{I}  P. S. Ilias, Existence and multiplicity of solutions of a $p(x)-$laplacian equation in a bounded domain, {\it Rev. Roumaine Math. Pures Appl.}, {\bf 52} , 6 (2007), 639-653.
\bibitem{Zou} W. Zou, Variant fountain theorems and their applications, {\it Manuscripta Math.} {\bf 104}, no. 3 (2001), 343-358.
\bibitem{Al-Li1} S. Alama, Y. Y. Li, Existence of solutions for semilinear elliptic equations with indefinite linear part, {\it J. Differential Equations} {\bf 96}, no. 1 (1992), 89-115.
\bibitem{Al-Li2}  S. Alama, Y. Y. Li, "On multibump" bound states for certain semilinear elliptic equations, {\it Indiana Univ. Math. J.} {\bf 41}, no. 4 (1992), 983-1026.
\bibitem{S-W} A. Szulkin, T. Weth, Ground state solutions for some indefinite variational problems, {\it J. Funct. Anal.} {\bf 257}, no. 12 (2009), 3802-3822
\bibitem{P} A. Pankov, Periodic nonlinear Schr\"{o}dinger equation with application to photonic crystals, {\it Milan J. Math.} {\bf 73} (2005)
259-287.
\bibitem{W-Tro} C. Trostler and M. Willem, Nontrivial solutions of a semilinear Schr\"{o}dinger
equation, {\it Commun. Part. Differ. Equat.} {\bf 21} (1996), 1431-1449.
\bibitem{P-P} A. A. Pankov, K. P. Pfl\"{u}ger, On a semilinear Schr\"{o}dinger equation with periodic potential, {\it Nonlinear Anal.} {\bf 33} (1998), 593-609.
\bibitem{Lupo} D. Lupo, Existence and multiplicity of solutions for superquadratic noncooperative variational elliptic systems,
{\it Topol. Methods Nonlinear Anal.} {\bf 12}, no. 1 (1998), 27-46.
\bibitem{PLL} P.-L. Lions, The concentration-compactness principle in the calculus of variations. The locally
compact case. II, {\it Ann. Inst. H. Poincar\'{e} Anal. Non Lin\'eaire}, vol. {\bf 1}, no. 4 (1984), 223-283.
\bibitem{B-L}   H. Br\'{e}zis, E. Lieb, A relation between pointwise convergence of functions and convergence of functionals, {\it Proc. Amer. Math. Soc.} {\bf 88}, no. 3 (1983), 486-490.
\bibitem{Ben} V. Benci, On critical point theory for indefinite functionals in the presence of symmetries,
{\it Trans. Amer. Math. Soc.} {\bf 274} (1982), 533-572.
\bibitem{Szul} A. Szulkin, {\it Critical point theory of Ljusternik-Schnirelmann type and applications to partial differential equations. Minimax results of Lusternik-Schnirelman type and applications}, (Montreal, PQ, 1986), 35-96, Sém. Math. Sup., 107, Presses Univ. Montréal, Montreal, QC, 1989
\bibitem{R} P.H. Rabinowitz, {\it Minimax Methods in Critical Point Theory with Applications to Differential Equations}, CBMS
Reg. Conf. Ser. Math., vol. 65, Amer. Math. Soc., Providence, RI, 1986.
%\bibitem{Br} H. Br\'{e}zis, Functional analysis, Sobolev spaces and partial differential equations. Universitext. Springer, New York, 2011. xiv+599 pp.
\bibitem{W}  M. Willem , {\it Minimax Theorems}, Birkhauser, Boston, 1996.
\end{thebibliography}
\end{document}